\newtheorem{theorem}{Theorem}[section]
\newtheorem{thm}[theorem]{Theorem}
\newtheorem{pro}{Proposition}[section] 
\newtheorem{lemma}[pro]{Lemma}
\newtheorem{remark}[pro]{Remark}
\newtheorem{defi}[pro]{Definition}
\newtheorem{example}{Example}[section]
\numberwithin{equation}{section}
\def\cal{\mathcal }
\def\N{\mathbb N}
\def\Z{\mathbb Z}
\def\mathscr{\mathcal }
\newcommand{\balpha}{{\boldsymbol{\alpha}}}
\newcommand{\bzero}{{\boldsymbol{0}}}
\newcommand{\bd}{{\boldsymbol{d}}}
\newcommand{\bdelta}{{\boldsymbol{\delta}}}
\newcommand{\bm}{{\mathbf{m}}}
\newcommand{\be}{{\mathbf e}}
\newcommand{\ba}{{\boldsymbol{a}}}
\newcommand{\bb}{{\mathbf{b}}}
\newcommand{\bc}{{\mathbf{c}}}
\newcommand{\bp}{{\mathbf{p}}}
\newcommand{\bq}{{\mathbf{q}}}
\newcommand{\SJ}{{\mathcal J}}
\begin{document}

\title[Complementing pairs of $(\Z_{\geq 0})^n$]{Characterization of complementing pairs of $({\mathbb Z}_{\geq 0})^n$}

\author{Hui Rao}
\address{School of Mathematics and  Statistic, Huazhong University of Science and Technology, Wuhan, 430074, China}
\email{hrao@mail.ccnu.edu.cn}

\author{Ya-min Yang}
\address{Institute of applied mathematics, College of Science, Huazhong Agriculture University, Wuhan,430070, China.}
\email{yangym09@mail.hzau.edu.cn}

\author{Yuan Zhang$^*$}
\address{School of Mathematics and  Statistic, Huazhong University of Science and Technology, Wuhan, 430074, China}
\email{yzhangccnu@sina.cn}

\date{\today}
\thanks {The work is supported by NSFS Nos. 11971195 and 11601172.}

\thanks{{\bf 2000 Mathematics Subject Classification:}  11P83, 52C22\\
{\indent\bf Key words and phrases:}\ complementing pair, power series,  weighted tree.
}
\thanks{* The correspondence author.}
\begin{abstract} Let $A, B, C$ be subsets of an abelian group $G$.
A pair $(A, B)$ is called a $C$-pair if $A, B\subset C$ and $C$ is the direct sum of $A$ and $B$.
 The $(\Z_{\geq 0})$-pairs are characterized by  de Bruijn in 1950 and the $(\Z_{\geq 0})^2$-pairs are characterized
 by Niven in 1971. In this paper, we characterize the $(\Z_{\geq 0})^n$-pairs for all $n\geq 1$. We show that
every $(\Z_{\geq 0})^n$-pair is characterized by a weighted tree if it is primitive, that is, it is not a Cartesian product of a $(\Z_{\geq 0})^p$-pair and a $(\Z_{\geq 0})^q$-pair of lower dimensions.
\end{abstract}

\maketitle


\section{\textbf{Introduction}}\label{sec:intro}
Let $\Z$ be the set of integers. Let $A, B, C\subset \Z^n$.  We define
$A+B=\{a+b;~a\in A, b\in B\}.$
We  call $A+B$  the \emph{direct sum} of $A$ and $B$ and denoted by $A\oplus B$, if
 every element $c\in A+B$ has a unique decomposition as $c=a+b$ with $a\in A, b\in B$.
We say $(A, B)$ is a \emph{complementing pair} of $C$, or a \emph{$C$-pair},  if $C=A\oplus B$ and $A, B\subset C$.

We are interested in $\Z^n$-pairs and $\N^n$-pairs, where we denote $\N=\Z_{\geq 0}$
for simplicity.
Furthermore,   a $\Z^n$-pair $(A, B)$ is called a \emph{$\Z^n$-tiling} if   $\#A<\infty$, where $\#A$ denotes the cardinality of $A$; in this case,
we call $A$ a \emph{$\Z^n$-tile}. Similarly, we define $\N^n$-tiling and $\N^n$-tile.

The characterization of   $\Z$-pairs  is an important problem in additive number theory.
 This problem  is first considered by de Bruijn \cite{deB50} in 1950.
In 1974, Swenson \cite{Swenson}  showed that this problem is   an \emph{NP}-hard problem.
There are very few results on this problem.

To determine when a finite set $A\subset \Z$ is a   $\Z$-tile is  somehow easier.
This was done by Newman \cite{Newman}  when $\#A$ is a power of a prime number, see also Tijdeman \cite{Tij}.
 Coven and Meyerowitz \cite{CM99} solved the problem
 when $\#A$ contains at most two prime factors, see also Sands \cite{Sands}.
  If $\#A$ contains more than
  two prime factors, the problem is still widely open \cite{Szabo, CM99}.
There are almost no   results on $\Z^n$-pairs with $n\geq 2$,  except
that  Szegedy \cite{Szegedy}  characterized the $\Z^d$-tile $T$
  in case that $\#T$   is a prime number or $\#T=4$.

The characterization of $\N$-pairs is much easier and  it was settled by de Bruijn \cite{deB56}, and was rediscovered by
Vaidya \cite{Vaidya}.
Let $(n_k)_{k\geq 1}$ be a sequence of integers with $n_k\geq 2$.  Any $t\in \N$ can be written uniquely in the form
\begin{equation}\label{eq:baseN}
t=  d_1+n_1d_2+(n_1n_2)d_3+\cdots+(n_1\cdots n_{L-1}) d_{L},
\end{equation}
where $d_j\in \{0,1,\dots, n_{j}-1\}$ for each $1\leq j\leq L$ and $d_L>0$. We denote the right hand side of
 \eqref{eq:baseN} by  $\overline{d_1\dots d_L}$.

\begin{pro}\label{thm:deB} (de Bruijn \cite{deB56}, Vaidya \cite{Vaidya})
(i)  A pair $(T,\SJ)$ is an $\N$-pair with $\#T=\#\SJ=\infty$ if and only if there exists a sequence of integers $(n_k)_{k\geq 1}$ with $n_k\geq 2$
such that
\begin{equation}\label{eq:base}
T=\bigcup_{L=1}^\infty \{\overline{d_1\dots d_L};~ d_j=0 \text{ if $j$ is odd}\}, \
\SJ=\bigcup_{L=1}^\infty \{\overline{d_1\dots d_L};~ d_j=0 \text{ if $j$ is even}\},
\end{equation}
or the other round.

(ii) $T$ is an $\N$-tile if and only if there exist $L\geq 2$ and  $(n_k)_{k= 1}^{L}$ with $n_k\geq 2$
such that
$$
T= \{\overline{d_1\dots d_L};~ d_j=0 \text{ if $j$ is odd}\} \text{ or } T= \{\overline{d_1\dots d_L};~ d_j=0 \text{ if $j$ is even}\}.
$$
\end{pro}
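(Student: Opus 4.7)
The natural approach is via generating functions. For $S \subseteq \N$, let $S(x) = \sum_{s \in S} x^s$; then $(T, \SJ)$ is an $\N$-pair if and only if $T(x)\SJ(x) = 1/(1-x)$ as formal power series with $0$--$1$ coefficients. The proof exhibits the alternating digit structure as an infinite factorization of $1/(1-x)$.

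For the sufficient direction, set $M_k = n_1 n_2 \cdots n_k$ with $M_0 = 1$. The description of $T$ in \eqref{eq:base} makes $T$ the set of sums $\sum_{i \geq 1} M_{2i-1} d_{2i}$ with only finitely many nonzero $d_{2i} \in \{0, \ldots, n_{2i}-1\}$, so
\[
T(x) \;=\; \prod_{i \geq 1}\bigl(1 + x^{M_{2i-1}} + x^{2 M_{2i-1}} + \cdots + x^{(n_{2i}-1) M_{2i-1}}\bigr) \;=\; \prod_{i \geq 1} \frac{1 - x^{M_{2i}}}{1 - x^{M_{2i-1}}},
\]
and symmetrically $\SJ(x) = \prod_{i \geq 1} (1 - x^{M_{2i-1}})/(1 - x^{M_{2i-2}})$. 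The product $T(x)\SJ(x)$ then telescopes to $\lim_{L \to \infty} (1 - x^{M_L})/(1-x) = 1/(1-x)$ as a formal power series.

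For the necessary direction, assume $(T, \SJ)$ is an $\N$-pair with both sets infinite. Uniqueness of the decomposition $0 = 0 + 0$ forces $0 \in T \cap \SJ$, and sharing any positive element would give two decompositions, so $\min(T \setminus \{0\}) \neq \min(\SJ \setminus \{0\})$. Swapping $T$ and $\SJ$ if necessary, I may assume $n_1 := \min(T \setminus \{0\}) > \min(\SJ \setminus \{0\})$. Then for each $k \in \{1, \ldots, n_1 - 1\}$, the only element of $T$ not exceeding $k$ is $0$, forcing the unique decomposition $k = 0 + k$ and hence $k \in \SJ$; thus $\{0, 1, \ldots, n_1 - 1\} \subseteq \SJ$. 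The central reduction is the \textbf{key lemma}: $T \subseteq n_1 \N$ and $\SJ = \{0, 1, \ldots, n_1 - 1\} \oplus n_1 \SJ'$ for some $\SJ' \subseteq \N$, and setting $T' := T/n_1$, the pair $(T', \SJ')$ is again an $\N$-pair. Given this, iterating (with the roles of $T'$ and $\SJ'$ swapped, since now the smallest positive element lies in $\SJ'$) yields the full sequence $(n_k)_{k \geq 1}$ and the claimed form.

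I would prove the key lemma by strong induction on the blocks $I_q = [q n_1, (q+1) n_1)$, $q \geq 0$. The inductive hypothesis is that exactly one of three patterns holds on each block: (a) $T \cap I_q = \{q n_1\}$ and $\SJ \cap I_q = \emptyset$; (b) $T \cap I_q = \emptyset$ and $\SJ \cap I_q = I_q$; or (c) both intersections are empty. Identifying this trichotomy is the conceptual heart of the argument; once the inductive hypothesis is in place, the inductive step is a routine enumeration of candidate decompositions $k = t + j$ for $k \in I_q$, classified as ``boundary'' (one summand is $0$) or ``middle-type'' $(s n_1,\,(q-s) n_1 + r)$ with $s$ an (a)-index and $q - s$ a (b)-index; uniqueness of the decomposition of $q n_1$ then determines which case holds for block $q$. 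This block-by-block argument is the main technical obstacle. Part (ii) follows from the same construction applied to a finite $T$: the iteration extracting $(n_k)$ terminates after $L$ steps with the residual complementary set equal to $\N$, yielding the finite form.
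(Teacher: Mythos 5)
The paper does not prove this proposition; it cites de Bruijn and Vaidya, and the one-dimensional reduction your argument rests on is precisely Lemma~\ref{lem:sands}, which the paper also cites without proof. So there is no in-paper proof to compare against, and the question is simply whether your argument is correct and complete.

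Your sufficient direction is correct and matches the paper's overall philosophy (identify $\N$-pairs with factorizations of $\N(x)$): the digit description gives $T(x)=\prod_{i\ge1}\frac{1-x^{M_{2i}}}{1-x^{M_{2i-1}}}$ and $\SJ(x)=\prod_{i\ge1}\frac{1-x^{M_{2i-1}}}{1-x^{M_{2i-2}}}$, these are locally finite since $M_k\to\infty$, and the product telescopes to $1/(1-x)$. That part is a complete proof.

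The necessary direction, however, is a sketch rather than a proof, and you acknowledge as much. Three points. First, as stated your trichotomy cannot hold for the block $I_0=[0,n_1)$, where $T\cap I_0=\{0\}$ and $\SJ\cap I_0=I_0$ simultaneously; you must separate $q=0$ as the base case and run the trichotomy for $q\ge 1$. Second, the aside ``the smallest positive element lies in $\SJ'$'' is backwards: since $n_1\in T$, we have $1\in T'=T/n_1$, so the smallest positive element of $T'\cup\SJ'$ lies in $T'$; the roles swap because the \emph{larger} initial gap $n_2=\min(\SJ'\setminus\{0\})\ge 2$ is now on the $\SJ'$ side. Third and decisively, the inductive step of the trichotomy --- which you yourself flag as ``the main technical obstacle'' --- is not carried out. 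Ruling out a second decomposition of $qn_1+r$ coming from a middle-type pair $(sn_1,(q-s)n_1+r)$ requires a genuine argument keyed to the uniqueness of the decomposition of $qn_1$, not just an enumeration of shapes; and part~(ii) is disposed of in a single sentence. Filling in that block-by-block step is exactly what would turn this from a plausible outline into a proof. A good model for how to execute such a block argument rigorously is the proof of Theorem~\ref{thm:pure type} in the paper, which is the $n$-dimensional analogue of your key lemma and handles the same kind of case analysis (via the claims $T\cap\Omega_k=\{k\ba\}$ or $\emptyset$, $\SJ\cap\Omega_k=\Omega_k$ or $\emptyset$).
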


\begin{remark}{\rm Let $(T,\SJ)$ be an $\N$-pair with $\#T=\#\SJ=\infty$.
Long \cite{Long} made the interesting observation that $(T,-\SJ)$ is a $\Z$-pair;  Eigen and Hajian \cite{Eigen} showed that there exists a continuum number of sets $\widetilde \SJ$ such that $(T,\widetilde \SJ)$ is a $\Z$-pair.
}
\end{remark}

After a partial result was obtained
by Hansen \cite{Hansen},
Niven \cite{Niven} characterized the $\N^2$-pairs  (See Example \ref{exam:Niven} in the end of this section).

The  goal of the present paper is to characterize   the $\N^n$-pairs for all $n\geq 1$.
For simplicity, we call $(T, \SJ)$ an \emph{$\N^*$-pair}
if   $(T, \SJ)$ is an $\N^n$-pair for some $n\geq 1$.

It is easy to show that   if $(I_1,J_1)$ is an $\N^n$-pair and $(I_2, J_2)$ is an $\N^m$-pair, then
$(T,\SJ)=(I_1\times I_2,  J_1\times J_2)$ is an $\N^{n+m}$-pair, and we call $(T,\SJ)$ the Cartesian product of the two pairs.  (See Lemma \ref{lem:unique}.)

An $\N^n$-pair $(T,\SJ)$ is called \emph{primitive}, if it is not a Cartesian product of
two $\N^*$-pairs with lower dimensions. All $\N$-pairs are primitive.
Clearly, to characterize  $\N^*$-pairs, we only need  understand   primitive $\N^*$-pairs.

In literature, polynomials have been used to study direct sum decomposition of abelian groups (\cite{Haj, Sands, CM99}) or finite sets (\cite{Nath72}). The first  idea in this paper is to use power series  to handle $\N^*$-pairs. A power series with finite many variables is called a \emph{binary power series} if its coefficients belong to $\{0,1\}$.
Let $x=(x_1,\dots, x_n)$ be an $n$-tuple variable. For $\ba=(a_1,\dots, a_n)\in \N^n$,
we denote    $x^\ba=x_1^{a_1}\cdots x_n^{a_n}$.
For $A\subset \N^n$, we define
\begin{equation}\label{eq:mask}
A(x)=\sum_{\ba\in A} x^\ba.
\end{equation}
For example $\N(x)=\sum_{k\geq 0}  x^k$ and $\N^2(x,y)=\N(x)\N(y)$.
Clearly, $(T,\SJ)$ is a   $\N^n$-pair if and only if
$T(x)\SJ(x)=\N^n(x).$

We call $(C, D)$ an \emph{interval pair} if
it is a $\{0,1,\dots, N-1\}$-pair for some integer  $N\geq 2$, and we call $N$ the \emph{size} of the pair $(C,D)$.

\begin{remark}{\rm Nathanson \cite{Nath72} proved that if
$T\oplus S=\prod_{j=1}^n\{0,1,\dots, a_j\}$
is a higher dimensional interval pair,
then $(T,S)$ is a cartesian product of one-dimensional interval pairs.
}
\end{remark}

\subsection{Extension process}

Now, we introduce two ways to produce $\N^*$-pairs from a known $\N^n$-pair $(T, \SJ)$.
Let $j\in \{1,\dots, n\}$.
Denote   $x=(\bar x, x_j, \bar{\bar x})$, where
$ \bar x=(x_1,\dots, x_{j-1})$ and  $\bar{\bar x}=(x_{j+1}, \dots, x_n).$

\noindent \textit{(1) Extension of the first type.}
 Let $N\geq 2$   and $(C,D)$
be a $\{0,1, \dots, N-1\}$-pair.  Set
\begin{equation}\label{eq:ext-1}
T^*(x)=C(x_j)T(\bar x, x_j^N,\bar{\bar x}), \quad
\SJ^*(x)=D(x_j)\SJ(\bar x, x_j^N,\bar{\bar x}).
\end{equation}
Then $(T^*,\SJ^*)$ is an $\N^n$-pair, and we call it a first type extension of $(T, \SJ)$.
(See Lemma \ref{lem:first-type}.)

\noindent \textit{(2) Extension of the second type.}
Let $\delta\in \{0,1\}$,
$m\geq 2$,  and  $\ba\in (\N\setminus\{0\})^m$. Set
$$
L_\ba=\N^m\setminus(\N^m+\ba).
$$
Denote $y=(y_1,\dots, y_m)$, and define $L_\ba(y)$ as we did in \eqref{eq:mask}. (For example, if $\ba=(1,1)$, then $L_\ba(y_1,y_2)=1+\sum_{k\geq 1}y_1^k+\sum_{k\geq 1}y_2^k$.)
Set
\begin{equation}\label{eq:ext-2}
T^*(\bar x, y, \bar{\bar x})=L^\delta _{\ba}(y)T(\bar x, y^\ba, \bar{\bar x}), \quad
\SJ^*(\bar x, y, \bar{\bar x})=L^{1-\delta}_{\ba}(y) \SJ(\bar x, y^\ba, \bar{\bar x}).
\end{equation}
Then $(T^*, \SJ^*)$ is an ${\N}^{n+m-1}$-pair, and we call it a second type extension of $(T, \SJ)$.
(See Lemma \ref{lem:second-type}.)

\medskip

Let $(T_j,\SJ_j)_{j=0}^k$ be a  sequence of $\N^{*}$-pairs. If
$(T_j, \SJ_j)$ is a first type or second type extension of $(T_{j-1}, \SJ_{j-1})$ for  $j=1,\dots, k$,
then we say $(T_k, \SJ_k)$ is a \emph{finite extension} of $(T_0, \SJ_0)$.

An  extension of an  $\N^*$-pair $(T, \SJ)$ is called an \emph{illegal extension}, if
 it is  a second type extension in \eqref{eq:ext-2}
satisfying $(T, \SJ)=(\{0\}, \N)$ and $\delta=0$, or $(T, \SJ)=(\N, \{0\})$ and $\delta=1$.
 An extension changes the primitivity if and only if it is illegal (Theorem \ref{thm:separable}).
Our first main result is the following.

\begin{thm}\label{thm:main} An $\N^*$-pair
$(T, \SJ)$ is  primitive  if and only if it is a finite extension of an $\N$-pair,
 and the extension process contains no illegal extension.
\end{thm}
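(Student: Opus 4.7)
I will prove the two directions separately, with the ``only if'' direction carrying the main content.

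For the ``if'' direction, I induct on the length $k$ of the extension sequence. The base case $k = 0$ is immediate since every $\N$-pair is one-dimensional and hence trivially primitive. For $k \geq 1$, Theorem \ref{thm:separable} says that a single extension preserves primitivity exactly when it is legal, so the inductive hypothesis propagates through the entire extension chain.

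For the ``only if'' direction, I induct on a complexity measure of the primitive pair $(T, \SJ)$ -- for instance the dimension augmented by a rank parameter bounding how many times an extension can still be peeled off. The inductive step amounts to showing that any primitive $\N^n$-pair $(T, \SJ)$ that is not already an $\N$-pair is itself a legal one-step extension of some simpler primitive $\N^*$-pair $(T', \SJ')$. The central analytic tool is the power series identity
\begin{equation*}
T(x)\,\SJ(x) \;=\; \N^n(x) \;=\; \prod_{i=1}^n (1 - x_i)^{-1}.
\end{equation*}
I would search $(T, \SJ)$ for one of two factorization patterns: either a coordinate $x_j$ and an integer $N \geq 2$ with $T(x) = C(x_j)\,\tilde T(\bar x, x_j^N, \bar{\bar x})$ and $\SJ(x) = D(x_j)\,\tilde\SJ(\bar x, x_j^N, \bar{\bar x})$ where $(C, D)$ is a size-$N$ interval pair (a first-type reduction), or a block of coordinates $y = (y_1, \dots, y_m)$ and a vector $\ba \in (\N \setminus \{0\})^m$ such that $L_\ba^\delta(y)$ divides one of $T, \SJ$ after the substitution $y \mapsto y^\ba$ (a second-type reduction).

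The main obstacle will be to show that at least one such pattern must exist in every primitive $\N^*$-pair that is not an $\N$-pair. This is where the primitivity hypothesis is leveraged: a pair admitting neither factorization should turn out to split as a Cartesian product, contradicting primitivity. Concretely, one would analyze the set of exponents of a single variable appearing in $T$ and $\SJ$ to detect a first-type interval divisor, and identify the boundary set $L_\ba$ embedded in $T$ or $\SJ$ to detect a second-type coupling vector. Throughout the extraction, Theorem \ref{thm:separable} is the mechanism ensuring that the peeled-off extension is legal and that $(T', \SJ')$ remains primitive, so that the induction continues and terminates at an $\N$-pair.
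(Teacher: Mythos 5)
Your plan for the ``if'' direction is correct and matches the paper: induct on the length of the extension chain and invoke Theorem~\ref{thm:separable} at each step.

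For the ``only if'' direction, however, your proposal is essentially a restatement of the claim rather than a proof, and it contains two concrete gaps. First, your induction quantity -- ``dimension augmented by a rank parameter bounding how many times an extension can still be peeled off'' -- is circular, since the existence of such a finite bound is precisely what you need to establish. The paper avoids this by inducting on dimension $n$ alone, but this only works because the reduction is structured so that the second-type step strictly decreases dimension (the block $y$ has $m\geq 2$ coordinates collapsing to a single variable), and the first-type steps are organized as a finite preliminary pass (Theorem~\ref{thm:1-face}) that brings $(T,\SJ)$ into the normalized class $\mathcal{F}_0$ where $T\cap\N\be_j\in\{\{\bzero\},\N\be_j\}$ for each $j$, rather than being interleaved arbitrarily. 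Your proposal does not say how to prevent an infinite sequence of first-type reductions in the same variable, nor why the first-type phase terminates.

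Second, you explicitly flag ``the main obstacle will be to show that at least one such pattern must exist'' but then only gesture at the answer (``analyze the set of exponents'', ``identify the boundary set $L_\ba$''). This is exactly where the paper's hard content lies and it is not automatic. The paper needs: Theorem~\ref{thm:finite} (for a primitive pair, one of $T\cap\N\be_1$, $\SJ\cap\N\be_1$ is finite), whose proof uses the increasing interval-pair sequence of Lemma~\ref{cor:CkDk} together with Proposition~\ref{lem:CD}; Lemma~\ref{lem:Q_0} (after normalization to $\mathcal{F}_0$, a minimal-support term of $T_Q(x)-1$ furnishes an $m$-face with $m\geq 2$ whose marginal pair is of pure type); the structure theorem for pure-type pairs (Theorem~\ref{thm:pure type}); and, crucially, the ``key lemma'' (Lemma~\ref{lem:high-dim}), an induction on sections $k\ba+L_\ba$ that propagates the pure-type marginal structure through all slices of $\N^{n-m}$ and yields the second-type factorization in Theorem~\ref{thm:marginal}. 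None of this is derivable from the power-series identity $T(x)\SJ(x)=\prod(1-x_i)^{-1}$ alone, and your claim that ``a pair admitting neither factorization should turn out to split as a Cartesian product'' is an unsubstantiated hope, not an argument. As written, the only-if direction is a plan, not a proof.
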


 A finite extension of an $\N$-pair is  primitive   is proved in Section 3. The other direction of
  Theorem \ref{thm:main} is proved in Section 4-8, where
  Lemma \ref{lem:high-dim} and Theorem \ref{thm:marginal} contain  new techniques and  new phenomena which are different from dimension one and two.

\subsection{Weighted tree of an $\N^*$-pair.}
For a primitive $\N^*$-pair, we introduce  a weighted tree to record the information of the extension process
in Theorem \ref{thm:main}. Indeed, a weighted tree  provides a visualization of the structure of the associated  $\N^*$-pair.

 Let $(V, \Gamma)$ be a  finite tree with a root $\phi$, where $V$ is the node set and $\Gamma$ is the edge set.
  A node is called a \emph{top} if it has no offspring.
 Let $V_0$ denote the set of tops of $(V, \Gamma)$.
 In Section 9, we define a \emph{weight} of $(V,\Gamma)$ to be a quadruple
 \begin{equation}\label{w-tree}
 \left((T_0,\SJ_0),\{\bdelta_v\}_{v\in V\setminus V_0},\{\balpha_v\}_{v\in V\setminus\{\phi\}}, \{(C_v, D_v)\}_{v\in V\setminus\{\phi\}} \right ),
 \end{equation}
 where $(T_0, \SJ_0)$ is an  $\N$-pair and we call it the \emph{initial pair}.
 We associate an $\N^*$-pair to a weighted tree, and  we call it the $\N^*$-pair generated by the weighted tree.
  We show that

 \begin{thm}\label{thm:main-2}  An $\N^*$-pair $(T, \SJ)$ is a  primitive $\N^n$-pair
 if and only if $(T,\SJ)$ is  generated by a  weighted tree with $n$ tops.
 \end{thm}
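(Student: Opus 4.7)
The plan is to use Theorem \ref{thm:main} as the bridge: a weighted tree is essentially a combinatorial record of a legal finite extension sequence starting from an $\N$-pair, with the tops of the tree corresponding bijectively to the final variables of the resulting $\N^*$-pair.

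For the \emph{if} direction, I would process the tree by a top-down traversal. At the root $\phi$, begin with the initial $\N$-pair $(T_0, \SJ_0)$ regarded as a single-variable pair attached to $\phi$. Inductively, at each internal node $v$ with children $v_1,\dots,v_m$, perform a second type extension on the variable currently attached to $v$, using parameter $\bdelta_v$ and exponent vector $(\balpha_{v_1},\dots,\balpha_{v_m})$; this turns one variable into $m$ new variables, one per child. On the edge leading to each non-root node $v$, perform a first type extension with the recorded interval pair $(C_v, D_v)$ on the variable attached to $v$. Lemmas \ref{lem:first-type} and \ref{lem:second-type} guarantee that each step yields a legitimate $\N^*$-pair, so the output is a finite extension of $(T_0,\SJ_0)$, and the constraints built into the definition of a weighted tree in Section 9 ensure that no step is illegal in the sense of Theorem \ref{thm:main}. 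Hence the final pair is primitive. A dimension count then shows that each internal node of arity $m$ contributes $m-1$ to the dimension, so the dimension of the constructed $\N^*$-pair equals the number of tops.

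For the \emph{only if} direction, start with a primitive $\N^n$-pair $(T,\SJ)$ and apply Theorem \ref{thm:main} to realize it as a legal finite extension of some $\N$-pair $(T_0,\SJ_0)$. I would build the weighted tree incrementally while maintaining a bijection between the current variables of the intermediate pair and the current leaves of the partial tree. A first type extension on the variable $x_j$ is recorded as edge data $(C,D)$ at the corresponding leaf, consolidating with any preceding first type data on that leaf's incoming edge via the identity $C_2(x_j)C_1(x_j^{N_2})$ which is itself a single interval-pair mask of size $N_1N_2$. A second type extension of rank $m$ on $x_j$ splits the corresponding leaf into $m$ new leaves, records the components of $\ba$ as $\balpha$-labels on the children, and stores $\delta$ at the (now internal) parent. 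Carrying this out until the extension sequence terminates yields a weighted tree whose $n$ tops are in bijection with the $n$ variables of $(T,\SJ)$ and whose generated $\N^*$-pair, by the \emph{if} direction, is $(T,\SJ)$ itself.

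The main obstacle will be well-definedness of the encoding: the tree produced may a priori depend on the order in which extensions are applied, and consecutive first type extensions on the same variable must be consolidated into a single edge label $(C_v,D_v)$. I expect this to require a small commutation/exchange lemma, showing that second type extensions on disjoint variables commute with each other and that a first type extension on $x_j$ commutes with a second type extension on $x_k$ for $k\neq j$, so that any legal extension sequence can be rearranged into a canonical tree-traversal order. A secondary, more bookkeeping-level point is to check that the constraints imposed on $(\{\bdelta_v\},\{\balpha_v\},\{(C_v,D_v)\})$ in the definition of a weighted tree in Section 9 exclude exactly the configurations that would produce an illegal extension, so that the correspondence between weighted trees and legal finite extension sequences is tight on both sides.
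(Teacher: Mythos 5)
Your proposal is correct and follows essentially the same strategy as the paper: use Theorem \ref{thm:separable} (plus the constraint on $\bdelta_\phi$ in Definition \ref{def:tree}) to get the ``if'' direction for free, and for ``only if'' invoke Theorem \ref{thm:main} to obtain a legal extension sequence from an $\N$-pair, then build the weighted tree step by step, consolidating each first-type extension into the edge weight $\Phi_{x_j}$ and recording each second-type extension by sprouting a new branch of tops.

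One remark worth making: the ``main obstacle'' you flag, namely a commutation/exchange lemma so that the extension sequence can be rearranged into a canonical tree-traversal order, is not actually needed and the paper does not prove one. The paper's induction simply processes the given extension sequence in its given order, modifying the current tree one step at a time (Case 1 replaces $\Phi_{x_1}$ by the composed interval pair $(C+NC_{x_1},\, D+ND_{x_1})$; Case 2 attaches new top nodes under $x_1$ and sets $\bdelta'_{x_1}$ and the $\balpha'$, $\Phi'$ labels accordingly), so no rearrangement ever occurs and no exchange lemma is required. The issue that genuinely requires attention is the branch-independence of the recursive rule that turns a weighted tree into an $\N^*$-pair, and the paper handles that not by a commutation argument but by establishing the closed product formula $T=\prod_{v\in V}P_v$, $\SJ=\prod_{v\in V}Q_v$ of Theorem \ref{thm:product}, which is manifestly independent of the choice of branch. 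You should also distinguish these two points in your write-up: for the ``only if'' direction you only need to exhibit \emph{some} weighted tree generating $(T,\SJ)$, so uniqueness of the encoding is irrelevant; branch-independence matters only so that the phrase ``generated by a weighted tree'' is meaningful in the first place.
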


 \begin{remark}{\rm We show that $(T,\SJ)$  is an $\N^*$-tiling
if and only if in the associated weighted tree, the initial pair is an $\N$-tiling with $\#T_0<\infty$ and
the map $\bdelta$ is constantly zero, or the other round (see Remark \ref{rem:tiling}).
It is folklore that an $\N^n$-tile is also a $\Z^n$-tile, so our construction may shed some light to
the study of $\Z^n$-tiles with $n\geq 2$, an area is almost untouched.
}
\end{remark}

In Theorem \ref{thm:product} we give an explicit formula  for $\N^*$-pairs generated by   weighted trees. (In two dimensional case,   the formula is given by \eqref{eq:delta=0}.)
 Actually, for each $v\in V$, we define two sets $P_v, Q_v\subset \N^n$, and we show that
 $T=\oplus_{v\in V} P_v$ and $\SJ=\oplus_{v\in V} Q_v$, which give further factorizations of $T$ and $\SJ$.

 \begin{example}\label{exam:Niven}{\rm  Niven's characterization of  the $\N^2$-pairs  is
  the case $n=2$ of Theorem \ref{thm:main}.
In the following we describe  an extension process to generate primitive $\N^2$-pairs.

Let $(T_0, \SJ_0)$ be a non-trivial $\N$-pair,  $\delta\in \{0,1\}$,  $\ba=(a_1, a_2)\in ({\mathbb N}\setminus\{0\})^2$,
and let $\Phi_x=(C_1,D_1)$  and $\Phi_y=(C_2,D_2)$ be two interval pairs with size $N_1$ and $N_2$, respectively.
Moreover, $\delta=1$ if $T_0=\{0\}$ and $\delta=0$ if $\SJ_0=\{0\}$.

Regarding $\phi$ as a variable, we have
$T_0(\phi)\SJ_0(\phi)=\N(\phi).$
Applying a second type extension by setting $\phi=x^{a_1}y^{a_2}$, we obtain an $\N^2$-pair
$$
T_1(x,y)=L^\delta_\ba(x,y) T_0(x^{a_1}y^{a_2}), \quad \SJ_1(x,y)=L^{1-\delta}_\ba(x,y)\SJ_0(x^{a_1}y^{a_2}).
$$
Applying two first type extensions to $(T_1,\SJ_1)$, we obtain
\begin{equation}\label{eq:delta=0}
\begin{array}{rl}
T(x,y)&=C_1(x)C_2(y)L_\ba^\delta(x^{N_1},y^{N_2})T_0(x^{a_1N_1}y^{a_2N_2}),\\
 \SJ(x,y)&=D_1(x)D_2(y)L_\ba^{1-\delta}(x^{N_1},y^{N_2})\SJ_0(x^{a_1N_1}y^{a_2N_2}).
 \end{array}
\end{equation}
As a corollary of Theorem \ref{thm:main-2},  a pair $(T,\SJ)$ is a primitive $\N^2$-pair if and only if it is given by \eqref{eq:delta=0}.

}\end{example}

%
%
%
   The paper is organized as follows.
    In Section \ref{sec:unique}, we prove several simple lemmas.
  Section \ref{sec:extension} is devoted to extensions of $\N^*$-pairs.
   Section \ref{sec:pure type}--Section \ref{sec:proof}  investigate the reductions of primitive $\N^*$-pairs;
Theorem \ref{thm:main} is proved in   Section \ref{sec:proof}.
In section \ref{sec:tree} , we introduce weighted trees. Theorem \ref{thm:main-2} is proved   in Section 10.

\section{\textbf{Uniqueness and primitivity}}\label{sec:unique}

  We denote $\bzero_m=(0,\dots, 0)\in \Z^m$; if the dimension is implicit, then we just write $\bzero$.  Let $\prec_g$ be the \emph{lexicographical  order} on $\Z^n$.

\begin{lemma}\label{lem:unique} (i) (Uniqueness) If $(T, \SJ)$  and $(T, \SJ')$ are two  $\N^n$-pairs, then  $\SJ=\SJ'$.

 (ii)  If $T=I_1\times I_2$ and $\SJ=J_1\times J_2$ where
$(I_1,J_1)$ is an $\N^n$-pair and $(I_2, J_2)$ is an $\N^m$-pair, then
$(T,\SJ)$ is an $\N^{n+m}$-pair.
\end{lemma}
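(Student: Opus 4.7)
The plan is to translate both statements into identities of binary power series, which the paper has already set up just before the lemma: $(T, \SJ)$ is an $\N^n$-pair iff $T(x)\SJ(x) = \N^n(x)$.

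For part (i), first I would observe that $\bzero \in T$ and $\bzero \in \SJ$ (and similarly $\bzero \in \SJ'$): since $\bzero \in \N^n = T \oplus \SJ$ and both summands lie in $\N^n$, the decomposition $\bzero = \ba + \bb$ forces $\ba = \bb = \bzero$. Hence $T(x)$ has constant term $1$ and is a nonzero element of the formal power series ring $\Z[[x_1,\dots,x_n]]$. The two pair conditions give
\begin{equation*}
T(x)\SJ(x) = \N^n(x) = T(x)\SJ'(x),
\end{equation*}
so $T(x)\bigl(\SJ(x)-\SJ'(x)\bigr)=0$. Since $\Z[[x_1,\dots,x_n]]$ is an integral domain and $T(x)\neq 0$, I can cancel to get $\SJ(x)=\SJ'(x)$, and comparing coefficients yields $\SJ=\SJ'$.

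For part (ii), let $x=(x_1,\dots,x_n)$ and $y=(y_1,\dots,y_m)$. From the definition \eqref{eq:mask} of the generating series, the characteristic series of the Cartesian product factors:
\begin{equation*}
T(x,y) = I_1(x)\,I_2(y), \qquad \SJ(x,y) = J_1(x)\,J_2(y).
\end{equation*}
Multiplying and using the two hypotheses $I_1(x)J_1(x)=\N^n(x)$ and $I_2(y)J_2(y)=\N^m(y)$, together with $\N^{n+m}(x,y)=\N^n(x)\N^m(y)$, I obtain $T(x,y)\SJ(x,y) = \N^{n+m}(x,y)$. Since $T,\SJ\subset\N^{n+m}$ is immediate from $I_i,J_i\subset\N^{n_i}$, this identity (with all coefficients in $\{0,1\}$ on both sides) is exactly the defining condition for $(T,\SJ)$ to be an $\N^{n+m}$-pair.

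There is essentially no obstacle here; the only point requiring a brief word is the integral-domain cancellation in part (i), which I would justify by noting that the leading term with respect to $\prec_g$ of a product of two nonzero power series is the product of the leading terms, so the ring has no zero divisors. Everything else is a direct computation on generating series, and the proof fits naturally with the notation and identities already introduced in the introduction.
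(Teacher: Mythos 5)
Your proof is correct. Part (ii) is essentially identical to the paper's argument: both simply observe that $(I_1\times I_2)(x,y)(J_1\times J_2)(x,y) = I_1(x)J_1(x)\,I_2(y)J_2(y) = \N^n(x)\N^m(y) = \N^{n+m}(x,y)$.

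Part (i), however, is a genuinely different route. You cancel $T(x)$ in the formal power series ring, invoking the fact that $\Z[[x_1,\dots,x_n]]$ is an integral domain and that $T(x)\neq 0$ (constant term $1$). The paper instead gives a combinatorial induction: it enumerates $\SJ$ and $\SJ'$ in a fixed canonical order (by Euclidean norm, ties broken by $\prec_g$), shows $t_1=t_1'=\bzero$, and then argues that if the first $k$ elements agree, the equality $T\oplus(\SJ\setminus\{t_1,\dots,t_k\}) = T\oplus(\SJ'\setminus\{t_1',\dots,t_k'\})$ forces the $(k+1)$-st elements to agree. Your algebraic argument is shorter once the integral-domain fact is granted; the paper's is more self-contained and mirrors the style of the later reductions. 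One small caution in your justification: a nonzero element of $\Z[[x_1,\dots,x_n]]$ need not have a $\prec_g$-\emph{maximal} exponent in its support (consider $\sum_{k\ge 0}x_1^k x_2$), so the phrase ``leading term'' should be read as the $\prec_g$-\emph{least} exponent in the support, which does exist since $\prec_g$ well-orders $\N^n$ and is compatible with addition. With that reading, the least exponent of a product is indeed the sum of the least exponents, and the cancellation is sound.
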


 \begin{proof} (i) Let us list $\SJ$ as $(t_k)_{k\geq 1}$ by the following rule:
 for $s,t\in \SJ$, we list $s$ before $t$ if $|s|<|t|$; if $|s|=|t|$, we list $s$ before $t$
 if $s\prec_g t$. We list $\SJ'$ as $(t_k')_{k\geq 1}$ by the same rule.

 Clearly $t_1=t_1'=\bzero$.
  Since $T\oplus (\SJ\setminus\{t_1\})=T\oplus (\SJ'\setminus\{t_1\})$,  the minimum elements (according to the above  rule)
  of $\SJ\setminus\{t_1\}$ and $\SJ'\setminus\{t_1'\}$  coincide, so $t_2=t_2'$.
  Continue this procedure, we obtain  $\SJ=\SJ'$.

 (ii) Denote $x=(x_1,\dots, x_n)$, $y=(y_1,\dots, y_m)$.
  We have
  $
  (I_1\times I_2)(x,y)\cdot (J_1\times J_2)(x,y)=I_1(x)J_1(x)I_2(y)J_2(y)=
 \N^n(x)\N^m(y)=\N^{n+m}(x,y),
  $
  which proves  the second assertion.
  \end{proof}

Actually $(T,\SJ)$ is non-primitive if one of $T$ and $\SJ$ is a Cartesian product.

 \begin{lemma}\label{lem:2.1} If $(T, \SJ)$ is an $\N^{n+m}$-pair such that $T=I_1\times I_2\subset \N^n\times \N^m$, then
 $\SJ$ can be written as $J_1\times J_2\subset \N^n\times \N^m$.
 \end{lemma}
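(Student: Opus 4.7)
The plan is to translate the hypothesis into an identity of formal power series and then read off the desired factorization of $\SJ$.

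Writing $x = (x_1,\dots,x_n)$ and $y = (y_1,\dots,y_m)$, the hypothesis $T = I_1 \times I_2$ gives $T(x,y) = I_1(x)I_2(y)$, and $\N^{n+m}(x,y) = \N^n(x)\N^m(y)$. Hence the identity $T(x,y)\SJ(x,y) = \N^{n+m}(x,y)$ becomes
\[
I_1(x)\,I_2(y)\,\SJ(x,y) = \N^n(x)\,\N^m(y)
\]
in $\Z[[x_1,\dots,x_n,y_1,\dots,y_m]]$. Since $\bzero \in \N^{n+m}$ has the unique decomposition $\bzero+\bzero$, we have $\bzero \in T = I_1 \times I_2$, so $\bzero \in I_1$ and $\bzero \in I_2$. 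Thus $I_1(x)$ and $I_2(y)$ have constant term $1$ and are invertible in the formal power series ring. Define
\[
J_1(x) := \N^n(x)/I_1(x) \in \Z[[x]], \qquad J_2(y) := \N^m(y)/I_2(y) \in \Z[[y]],
\]
both of which have constant term $1$. Dividing the displayed identity by $I_1(x)I_2(y)$ yields
\[
\SJ(x,y) = J_1(x)\,J_2(y).
\]

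The crucial step is to show that $J_1$ and $J_2$ are binary power series. For any $\ba \in \N^n$, the coefficient of $x^\ba$ in $J_1(x)$ equals the coefficient of $x^\ba y^\bzero$ in $J_1(x)J_2(y) = \SJ(x,y)$ (since $J_2$ has constant term $1$); but $\SJ \subset \N^{n+m}$ forces this coefficient to lie in $\{0,1\}$. The analogous argument applies to $J_2$. Consequently $J_1(x)$ and $J_2(y)$ are the generating series of subsets $J_1 \subset \N^n$ and $J_2 \subset \N^m$, and $\SJ(x,y) = (J_1 \times J_2)(x,y)$ gives $\SJ = J_1 \times J_2$, as required.

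The only point that requires care is the legitimacy of formal division, but this is immediate from the constant terms equaling $1$; no real obstacle arises.
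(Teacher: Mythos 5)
Your proof is correct, and it takes a slightly different route from the paper's. The paper sets $y=\bzero_m$ in the identity to obtain $I_1(x)\SJ(x,\bzero_m)=\N^n(x)$, and \emph{defines} $J_1(x)=\SJ(x,\bzero_m)$; this $J_1$ is automatically a binary power series (it encodes $\SJ\cap(\N^n\times\{\bzero_m\})$), so no extra verification is needed. The paper then shows $(I_1,J_1)$ and $(I_2,J_2)$ are $\N^n$- and $\N^m$-pairs, applies Lemma~\ref{lem:unique}(ii) to conclude $(I_1\times I_2,\ J_1\times J_2)$ is an $\N^{n+m}$-pair, and finally invokes the uniqueness statement Lemma~\ref{lem:unique}(i) to force $\SJ=J_1\times J_2$. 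You instead define $J_1=\N^n(x)/I_1(x)$ by formal division, obtain $\SJ(x,y)=J_1(x)J_2(y)$ directly, and then supply the extra step of checking that $J_1$ and $J_2$ have $\{0,1\}$ coefficients by reading off the $y^\bzero$ and $x^\bzero$ slices of $\SJ$. Of course these two $J_1$'s are the same object, so the proofs are close in spirit; what your approach buys is that it sidesteps the uniqueness lemma entirely and everything happens inside $\Z[[x,y]]$, at the cost of the binaryness check. What the paper's approach buys is that binaryness is free and the conclusion drops out of the already-established uniqueness of complements. Either is acceptable; your argument is complete as written.
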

 \begin{proof}  Denote $x=(x_1,\dots, x_n)$, $y=(y_1,\dots, y_m)$.
 The assumptions imply that
  $$
  I_1(x)I_2(y)\SJ(x,y)=\N^{n+m}(x,y).
  $$
  Setting $y=\bzero_m$, we obtain
  $
  I_1(x) \SJ(x,\bzero_m)=\N^n(x).
  $
  Let $J_1(x)=\SJ(x, \bzero_m)$,  then $(I_1, J_1)$ is an $\N^n$-pair. Similarly,
  let $J_2(y)=\SJ(\bzero_n,y)$, then $(I_2, J_2)$ is an $\N^m$-pair.
  By Lemma \ref{lem:unique}(ii), $(I_1\times I_2, J_1\times J_2)$ is an $\N^{n+m}$-pair. Therefore
  $\SJ=J_1\times J_2$ by Lemma \ref{lem:unique}(i).
  \end{proof}

We close this section with several results about $\N$-pairs which will be needed later.

\begin{lemma}\label{lem:sands} (\cite{deB56, Sands})
If $(T, \SJ)$ is an $\N$-pair, then there exists an integer $p\geq 2$, and
an $\N$-pair $(A, B)$ such that
$T=pA$, $\SJ=\{0,1,\dots, p-1\}\oplus pB$
or the other round.
\end{lemma}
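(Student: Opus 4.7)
The plan is to identify a key integer $p$, establish $T\subset p\N$, and derive the factorization.

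First, I would dispose of the trivial cases $T=\{0\}$ or $\SJ=\{0\}$ by setting $p=2$ and $(A,B)=(\{0\},\N)$ or the symmetric choice. In the remaining case, any nonzero $c\in T\cap\SJ$ would yield two decompositions $c=c+0=0+c$, so $T\cap\SJ=\{0\}$. Hence $1$ lies in exactly one of $T,\SJ$, and after possibly swapping them (producing ``the other round'' of the conclusion) I may assume $1\in\SJ$ and set $p:=\min(T\setminus\{0\})\geq 2$. For any $k\in[0,p)$ the unique decomposition $k=t+s$ forces $t\leq k<p$ and hence $t=0$ since $T\cap[0,p)=\{0\}$, giving $k=s\in\SJ$. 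Thus $\{0,1,\dots,p-1\}\subset\SJ$.

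The main step is to show $T\subset p\N$. I proceed by induction on $n\geq 1$ with the hypothesis that $T\cap[0,np)=pA_n$ and $\SJ\cap[0,np)=\{0,\dots,p-1\}+pB_n$ for some $A_n,B_n\subset[0,n)$, where $(A_n,B_n)$ is a direct sum with $A_n\oplus B_n\supset\{0,\dots,n-1\}$. For the step $n\to n+1$, I analyse the unique decomposition $np+r=t+s$ for each $r\in[0,p)$. A decomposition with both $t<np$ and $s<np$ reduces to an identity $a+b=n$ with $a\in A_n$, $b\in B_n$; if such $(a,b)$ exists, then $(pa,pb+r)$ works for every $r$, and uniqueness in $T\oplus\SJ$ rules out any element of $T$ or $\SJ$ in $[np,(n+1)p)$, giving $A_{n+1}=A_n$, $B_{n+1}=B_n$. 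Otherwise $n\notin A_n+B_n$, so exactly one of $np\in T$ or $np\in\SJ$ must hold, and the analysis across all residues $r\in[0,p)$ forces the block $[np,(n+1)p)$ to split symmetrically: $T$ gains $\{np\}$ and $\SJ$ gains nothing, or vice versa. The main obstacle is ruling out intermediate configurations with some $np+j_0\in T$ for $0<j_0<p$; this is handled by exhibiting, for such a configuration, two distinct decompositions of a later element, for instance $(n+1)p+0=(p,np)$ via $p\in T,np\in\SJ$ and $(n+1)p+0=(np+j_0,p-j_0)$ via the bad element in $T$ and $p-j_0\in\{0,\dots,p-1\}\subset\SJ$, contradicting uniqueness of $T\oplus\SJ=\N$.

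Finally, having shown $T\subset p\N$, I write $T=pA$ with $A=\{t/p:t\in T\}$ and define $B_r:=\{k\geq 0:pk+r\in\SJ\}$ for each $r\in\{0,\dots,p-1\}$. Expanding $T(x)=A(x^p)$ and $\SJ(x)=\sum_r x^r B_r(x^p)$ in the identity $T(x)\SJ(x)=1/(1-x)=\sum_r x^r\N(x^p)$, and equating coefficients of $x^r$ for $0\leq r<p$, yields $A(y)B_r(y)=\N(y)$ for each $r$, so every $(A,B_r)$ is an $\N$-pair. By the uniqueness part of Lemma \ref{lem:unique}(i), all $B_r$ coincide with a single set $B$; hence $\SJ=\{0,1,\dots,p-1\}\oplus pB$ and $(A,B)$ is an $\N$-pair, as required.
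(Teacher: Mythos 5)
Your proof is correct. The paper does not supply its own proof of Lemma~\ref{lem:sands}; it simply cites de~Bruijn and Sands, whose route would be to invoke the full base-$(n_k)$ characterization of $\N$-pairs recorded as Proposition~\ref{thm:deB} and read off the leading digit $p=n_1$. Your argument is a self-contained, elementary replacement: it locates $p=\min(T\setminus\{0\})$ directly, shows $\{0,\dots,p-1\}\subset\SJ$, and then sweeps through the blocks $[np,(n+1)p)$ by induction, using uniqueness of the $(T,\SJ)$-decomposition of well-chosen elements (including $(n+1)p$) to rule out the mixed configurations. This buys independence from the structure theorem, at the cost of the block-by-block bookkeeping. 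The only loose spot is the phrase ``$T$ gains $\{np\}$ and $\SJ$ gains nothing, or vice versa'': in the second alternative $\SJ$ in fact acquires the entire block $\{np,\dots,(n+1)p-1\}$ (equivalently $B_{n+1}=B_n\cup\{n\}$), not just $\{np\}$; the symmetry is in $(A_n,B_n)$, not in $(T,\SJ)$. Your concluding power-series step, splitting $\SJ(x)=\sum_{r=0}^{p-1}x^rB_r(x^p)$ and invoking Lemma~\ref{lem:unique}(i) to force all $B_r$ to agree, is a clean way to finish, though the induction you ran already produces $\SJ=\{0,\dots,p-1\}\oplus pB$ directly.
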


\begin{lemma}\label{rem:N-pair} (\cite{deB56, Sands, Tij})
If  $(C, D)$ is a $\{0,1,\dots, N-1\}$-pair with $N\geq 2$,
 then there exists $p\geq 2$ and a $\{0,1,\dots, \frac{N}{p}-1\}$-pair $(\widetilde C,\widetilde D)$
such that
$C=p\widetilde C, D=p\widetilde D+\{0,1,\dots, p-1\}$ or the other round.
\end{lemma}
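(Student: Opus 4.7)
The plan is to prove the lemma directly, in the spirit of Lemma \ref{lem:sands} but adapted to the finite setting. If $C=\{0\}$ or $D=\{0\}$, set $p:=N$ and $\widetilde C:=\widetilde D:=\{0\}$; the conclusion is immediate. Otherwise, let $p_C:=\min(C\setminus\{0\})$ and $p_D:=\min(D\setminus\{0\})$ and, after possibly swapping $C$ and $D$, assume $p_C\geq p_D$. Set $p:=p_C$. Then $p\geq 2$ (the case $p=1$ would force $1\in C\cap D$, giving two decompositions $1=0+1=1+0$), and $\{0,1,\dots,p-1\}\subseteq D$: for each $j\in\{1,\dots,p-1\}$, the unique decomposition $j=c+d$ has $c<p$, hence $c=0$ by minimality of $p$, so $d=j\in D$.

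The technical core is a block trichotomy. By strong induction on $k\geq 1$ with $kp\leq N-1$, I would show that the block $B_k:=\{kp,kp+1,\dots,(k+1)p-1\}\cap\{0,\dots,N-1\}$ satisfies exactly one of: \textbf{(A)} $B_k\cap C=\{kp\}$ and $B_k\cap D=\emptyset$; \textbf{(B)} $B_k\cap C=\emptyset$ and $B_k\subseteq D$; or \textbf{(N)} $B_k\cap(C\cup D)=\emptyset$. Enumerating decompositions of $m=kp+j$ and grouping by the block containing $c$ shows that the number of ``external'' decompositions $(c,d)=(k'p,(k-k')p+j)$ (with $k'<k$ in a type-A block and $k-k'$ in a type-B block) is independent of $j\in\{0,\dots,p-1\}$, so uniqueness forces the local trichotomy. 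The delicate point is excluding ``mixed'' blocks, where $\{kp,\dots,kp+j_0-1\}\subseteq D$ and $kp+j_0\in C$ for some $j_0\geq 1$: if $(k+1)p\leq N-1$, then $(k+1)p$ admits the two valid decompositions $(p,kp)$ and $(kp+j_0,p-j_0)$ (both in $C\times D$ since $p\in C$, $kp\in D$, $kp+j_0\in C$, and $p-j_0\in\{1,\dots,p-1\}\subseteq D$); if $(k+1)p>N-1$, then $(p,kp)\in C\times D$ has sum $p+kp\geq N$, contradicting $C+D\subseteq\{0,\dots,N-1\}$. The same out-of-range argument applied to the potential last partial block rules out a partial type-B configuration.

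With the trichotomy in hand, let $S_A, S_B$ be the indices of type-A and type-B blocks, and set $\widetilde C:=\{0\}\cup S_A$ and $\widetilde D:=\{0\}\cup S_B$; one verifies $C=p\widetilde C$ and $D=\{0,\dots,p-1\}+p\widetilde D$ (the latter a direct sum). Substituting $C(x)=\widetilde C(x^p)$ and $D(x)=(1+x+\cdots+x^{p-1})\widetilde D(x^p)$ into $C(x)D(x)=1+x+\cdots+x^{N-1}$ yields $\widetilde C(x^p)\widetilde D(x^p)(1+x+\cdots+x^{p-1})=1+x+\cdots+x^{N-1}$. Since $(1+x+\cdots+x^{p-1})\mid(1+x+\cdots+x^{N-1})$ if and only if $p\mid N$, we conclude $p\mid N$; dividing gives $\widetilde C(y)\widetilde D(y)=1+y+\cdots+y^{N/p-1}$, so $(\widetilde C,\widetilde D)$ is a $\{0,1,\dots,N/p-1\}$-pair. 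The main obstacle will be the block-structure induction, particularly ruling out the mixed configurations.
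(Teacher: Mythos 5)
The paper quotes this lemma from \cite{deB56, Sands, Tij} without a proof, so there is no internal argument to compare against; your proposal therefore stands on its own, and it is correct. Fixing $p=\min(C\setminus\{0\})$ after normalization, establishing $\{0,\dots,p-1\}\subseteq D$, and running a block trichotomy by strong induction is the right skeleton. For a full block $B_k$, the single decomposition of each $kp+j$ ($0\le j\le p-1$) splits into a contribution from $c=0$ (namely whether $kp+j\in D$), internal contributions $c=kp+i$ with $0\le i\le j$ and $kp+i\in C$ (each paired with $j-i\in\{0,\dots,p-1\}\subseteq D$), and a $j$-independent external count over those $c=k'p$ with $1\le k'\le k-1$, $B_{k'}$ of type~A, and $B_{k-k'}$ of type~B; since the total is exactly $1$ for every $j$, this forces one of types A, B, N once your out-of-range trick eliminates the mixed configuration. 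Two small remarks. (a) In the partial-block discussion you only eliminate partial type~B; a partial type~A block (with $Kp\in C$) is equally impossible, since $Kp+(p-1)\in C+D$ would exceed $N-1$ by the same out-of-range reasoning. The omission is harmless: $C=p\widetilde C$ and $D=\{0,\dots,p-1\}\oplus p\widetilde D$ still hold under a partial type-A block, and your polynomial step then produces $p\mid N$, which shows a posteriori that no partial block exists. (b) A shorter route is to apply Lemma~\ref{lem:sands} to the $\N$-pair $(C,\ D\oplus N\N)$: it yields $C=pA$ and $D\oplus N\N=\{0,\dots,p-1\}\oplus pB$ for some $p\ge 2$ and $\N$-pair $(A,B)$ (or the other round), and intersecting with $\{0,\dots,N-1\}$ while using $\max C+\max D=N-1$ gives $p\mid N$ and the stated factorization. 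Your direct proof has the merit of not presupposing Lemma~\ref{lem:sands}, which the paper likewise cites rather than proves.
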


We denote $C_k\uparrow T$ if  $C_k\subset C_{k+1}$ and $T=\bigcup_{k\geq 1} C_k$. The following lemma is a easy consequence of Proposition \ref{thm:deB}. We leave the simple proof to the reader.
\begin{lemma}\label{cor:CkDk}
Let $(T, \SJ)$ be an $\N$-pair.
 If  $\#T=\#\SJ=\infty$, then there exists a sequence of interval pairs $\{(C_k, D_k)\}_{k\geq 1}$ such that
 $C_k\uparrow T$ and $D_k\uparrow \SJ$, and
 $$
 T=C_k\oplus N_kA_k, \quad \SJ=D_k\oplus N_kB_k
 $$
 for some $A_k, B_k\subset \N$, where $N_k$ is the size of $(C_k, D_k)$.
If $\#T<\infty$, then there exist an integer $N\geq 1$ and $D\subset \N$ such that $T\oplus D=\{0,1,\dots, N-1\}$ and $\SJ=D\oplus N\N.$
\end{lemma}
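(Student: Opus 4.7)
The plan is to unfold the base-representation description of $\N$-pairs from Proposition \ref{thm:deB} and truncate the defining base sequence at even length.

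Assume first $\#T = \#\SJ = \infty$. Proposition \ref{thm:deB}(i) supplies a base sequence $(n_k)_{k \geq 1}$ with $n_k \geq 2$ such that, after a possible swap of $T$ and $\SJ$, the elements of $T$ are precisely those $\overline{d_1 \cdots d_L}$ with $d_j = 0$ at every odd position, and the elements of $\SJ$ are those with $d_j = 0$ at every even position. For each $k \geq 1$, set $N_k = n_1 n_2 \cdots n_{2k}$ and define
$$C_k = \{\overline{d_1 \cdots d_{2k}} : d_j = 0 \text{ for odd } j\}, \quad D_k = \{\overline{d_1 \cdots d_{2k}} : d_j = 0 \text{ for even } j\}.$$
Uniqueness of the base representation \eqref{eq:baseN} gives $C_k \oplus D_k = \{0, 1, \ldots, N_k - 1\}$, so $(C_k, D_k)$ is an interval pair of size $N_k$. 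The inclusions $C_k \subset C_{k+1} \subset T$ and $\bigcup_k C_k = T$ are immediate from the union in \eqref{eq:base}, giving $C_k \uparrow T$; the same argument yields $D_k \uparrow \SJ$.

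Next, I would split each $t \in T$ at position $2k$. Since $2k$ is even, the parity pattern of the digits past position $2k$ is preserved, so the shifted base $(n_{2k+1}, n_{2k+2}, \ldots)$ produces, by the same recipe from Proposition \ref{thm:deB}(i), an $\N$-pair $(A_k, B_k)$ with the identical ``odd-zero / even-zero'' form. The identity
$$t = \overline{d_1 \cdots d_{2k}} + N_k \cdot \overline{d_{2k+1} \cdots d_L}$$
writes each $t \in T$ uniquely as $c + N_k a$ with $c \in C_k$ and $a \in A_k$; uniqueness of base representation makes the sum direct, so $T = C_k \oplus N_k A_k$, and symmetrically $\SJ = D_k \oplus N_k B_k$.

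For the finite case $\#T < \infty$, Proposition \ref{thm:deB}(ii) yields $L \geq 2$ and $n_1, \ldots, n_L \geq 2$ such that, up to a swap, $T = \{\overline{d_1 \cdots d_L} : d_j = 0 \text{ for odd } j\}$. Let $N = n_1 \cdots n_L$ and $D = \{\overline{d_1 \cdots d_L} : d_j = 0 \text{ for even } j\}$. Base-representation uniqueness again gives $T \oplus D = \{0, 1, \ldots, N - 1\}$, hence $T \oplus (D \oplus N\N) = \N = T \oplus \SJ$, and Lemma \ref{lem:unique}(i) forces $\SJ = D \oplus N\N$. The only minor points to be careful about are the convention for representing $0$ and tracking the possible swap of roles between $T$ and $\SJ$; there is no genuine conceptual obstacle, consistent with the ``simple proof'' left to the reader.
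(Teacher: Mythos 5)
Your proposal is correct and takes exactly the approach the paper intends: the paper explicitly leaves this as ``an easy consequence of Proposition~\ref{thm:deB}'' for the reader, and your argument is precisely that unfolding. The even-length truncations $C_k, D_k$, the tail $\N$-pair $(A_k, B_k)$ in the shifted base $(n_{2k+1}, n_{2k+2},\dots)$, and the uniqueness argument for the finite case are all sound.

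One small omission worth patching: Proposition~\ref{thm:deB}(ii) requires $L\geq 2$ and all $n_k\geq 2$, so it only produces tiles with $\#T\geq 2$. The degenerate $\N$-pair $(T,\SJ)=(\{0\},\N)$ falls under ``$\#T<\infty$'' but is not covered by that clause, and the lemma's $N\geq 1$ (rather than $N\geq 2$) is there exactly to accommodate it. This case does arise where the lemma is applied (e.g.\ in the proof of Theorem~\ref{thm:1-face}, where $A$ may be $\{0\}$). It is handled trivially by taking $N=1$ and $D=\{0\}$, giving $T\oplus D=\{0\}$ and $\SJ=D\oplus N\N=\N$; you should add this sentence to make the finite case complete.
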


\section{\textbf{Extensions of $\N^n$-pairs}}\label{sec:extension}

 In this section, we  prove that a finite extension of an $\N$-pair is a primitive $\N^*$-pair.

 Let $\ba\in \N^m$ with $\ba>0$,  recall that
 $L_\ba=\N^m\setminus (\N^m+\ba).$
 Take $z\in \N^m$, and let $k$ be the largest integer such that $z-k\ba\in \N^m$,
 then  $z-k\ba\in L_\ba$. Hence
 \begin{equation}\label{eq:Lba}
 L_\ba\oplus  \N\ba=\N^m.
 \end{equation}
 Especially, $L_p=\{0,1,\dots, p-1\}.$  Denote $x=(x_1,\dots, x_n)$ and $\bar x=(x_2,\dots, x_n)$.

   \begin{lemma}\label{lem:first-type}
The pair  $(T^*,\SJ^*)$  defined in \eqref{eq:ext-1} is an $\N^n$-pair.
   \end{lemma}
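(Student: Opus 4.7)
The plan is to verify the equality of generating series $T^*(x)\SJ^*(x)=\N^n(x)$ directly, together with the auxiliary fact that $T^*,\SJ^*$ are genuine subsets of $\N^n$ (i.e.\ their formal power series have $0/1$ coefficients). The identity itself is almost a one-line manipulation; the only thing to be careful about is that the two factors assemble \emph{as a direct sum}, not merely as a formal product.

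First I would unpack the definition. Since $(C,D)$ is a $\{0,1,\dots,N-1\}$-pair, we have $C(x_j)D(x_j)=1+x_j+\cdots+x_j^{N-1}$. Since $(T,\SJ)$ is an $\N^n$-pair, substituting $x_j\mapsto x_j^N$ gives
\[
T(\bar x,x_j^N,\bar{\bar x})\,\SJ(\bar x,x_j^N,\bar{\bar x})=\N^n(\bar x,x_j^N,\bar{\bar x})=\N(x_1)\cdots\N(x_{j-1})\N(x_j^N)\N(x_{j+1})\cdots\N(x_n).
\]
Multiplying these two factorizations and using the elementary geometric-series identity $(1+x_j+\cdots+x_j^{N-1})\,\N(x_j^N)=\N(x_j)$, I obtain $T^*(x)\SJ^*(x)=\N^n(x)$.

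Next I would argue that $T^*$ and $\SJ^*$ really are subsets of $\N^n$, i.e.\ their defining series are binary. For $T^*$ this amounts to showing that the map $(c,\mathbf t)\mapsto(t_1,\dots,t_{j-1},c+Nt_j,t_{j+1},\dots,t_n)$ from $C\times T$ to $\N^n$ is injective, which follows from uniqueness of the base-$N$ remainder in the $j$-th coordinate; the argument for $\SJ^*$ is identical. Containment in $\N^n$ is then automatic, because $0\in C\cap D$ (forced by $C\oplus D=\{0,\dots,N-1\}$) and $\bzero\in T\cap\SJ$ (forced by $T\oplus\SJ=\N^n$), so $\bzero\in T^*\cap\SJ^*$ and hence $T^*\subset T^*+\SJ^*=\N^n$ and $\SJ^*\subset\N^n$.

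Finally, the direct sum property comes for free: the product $T^*(x)\SJ^*(x)$ was computed as $\N^n(x)$, whose coefficients are all $1$, so each monomial in $\N^n$ admits exactly one decomposition as a product of a monomial from $T^*$ and one from $\SJ^*$. There is no real obstacle here; the only subtle point is to keep in mind that writing $T^*(x)=C(x_j)T(\bar x,x_j^N,\bar{\bar x})$ encodes a \emph{direct} sum (not just a sum) precisely because of the $N$-adic uniqueness argument mentioned above, and I would make this observation explicitly before concluding.
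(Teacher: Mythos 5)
Your proof is correct and takes essentially the same route as the paper: both verify the generating-series identity $T^*(x)\SJ^*(x)=\N^n(x)$ via $C(x_j)D(x_j)=1+x_j+\cdots+x_j^{N-1}$ and the geometric-series telescoping $(1+x_j+\cdots+x_j^{N-1})\N(x_j^N)=\N(x_j)$. The extra care you take to check that $T^*(x)$ and $\SJ^*(x)$ are binary is reasonable but in fact comes for free: they are products of binary series, hence have nonnegative integer coefficients, and together with $T^*(x)\SJ^*(x)=\N^n(x)$ (all coefficients equal to $1$) this already forces every coefficient of $T^*$ and $\SJ^*$ to lie in $\{0,1\}$.
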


 \begin{proof} Without loss of generality, we assume $j=1$ in \eqref{eq:ext-1}. We have
 $$
   \begin{array}{rl}
   T^*(x) \SJ^*(x) &=C(x_1)D(x_1) T(x_1^N, \bar x)\SJ(x_1^{N}, \bar x)\\
   &=(1+x_1+\cdots +x_1^{N-1})\N(x_1^N)\N^{n-1}(\bar x)=\N^{n}(x).
   \end{array}
 $$
   The lemma is proved.
 \end{proof}


\begin{lemma}\label{lem:second-type}
   The pair  $(T^*,\SJ^*)$  defined in \eqref{eq:ext-2} is an $\N^{n+m-1}$-pair.
   \end{lemma}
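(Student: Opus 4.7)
The plan is to prove that $(T^*, \SJ^*)$ is an $\N^{n+m-1}$-pair by verifying the power series identity
$$T^*(\bar x, y, \bar{\bar x}) \cdot \SJ^*(\bar x, y, \bar{\bar x}) = \N^{n+m-1}(\bar x, y, \bar{\bar x}).$$
Combined with the observation that $T^*$ and $\SJ^*$ are binary power series supported in $\N^{n+m-1}$, this is equivalent to $\N^{n+m-1} = T^* \oplus \SJ^*$ by the criterion stated just before \eqref{eq:mask}.

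Multiplying the two expressions in \eqref{eq:ext-2} and using $L_\ba^\delta(y) L_\ba^{1-\delta}(y) = L_\ba(y)$ (since $\delta + (1-\delta) = 1$), the product reduces to
$$L_\ba(y) \cdot T(\bar x, y^\ba, \bar{\bar x}) \, \SJ(\bar x, y^\ba, \bar{\bar x}).$$
Since $(T, \SJ)$ is an $\N^n$-pair, $T(x)\SJ(x) = \N^n(x)$; substituting $x_j = y^\ba$ into this identity yields $T(\bar x, y^\ba, \bar{\bar x}) \, \SJ(\bar x, y^\ba, \bar{\bar x}) = \N^{j-1}(\bar x) \, \N(y^\ba) \, \N^{n-j}(\bar{\bar x})$. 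The final step is to invoke \eqref{eq:Lba}, $L_\ba \oplus \N \ba = \N^m$, translated into its power series form $L_\ba(y) \N(y^\ba) = \N^m(y)$, which combines with the above to give the target $\N^{j-1}(\bar x) \N^m(y) \N^{n-j}(\bar{\bar x}) = \N^{n+m-1}(\bar x, y, \bar{\bar x})$.

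For the binary check: both $T^*$ and $\SJ^*$ are products of binary power series and therefore have non-negative integer coefficients. Moreover, each contains a factor with constant term $1$, since $\bzero \in L_\ba$ and $\bzero$ lies in both $T$ and $\SJ$ (as $(T, \SJ)$ is an $\N^n$-pair with $\bzero \in \N^n$, forcing the unique decomposition $\bzero = \bzero + \bzero$). Hence a coefficient $\geq 2$ in $T^*$ would force a coefficient $\geq 2$ in $T^* \SJ^*$, contradicting the identity just established; the same holds for $\SJ^*$.

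The argument is essentially routine. The one point that requires care is recognizing that the substitution $x_j \mapsto y^\ba$ converts the set-level relation \eqref{eq:Lba} into exactly the power series factor needed to absorb the $m-1$ new variables introduced by the extension, so there is no substantive obstacle.
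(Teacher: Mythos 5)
Your proof is correct and follows essentially the same approach as the paper: multiply the two power series, cancel $L_\ba^{\delta}L_\ba^{1-\delta} = L_\ba$, substitute $x_j = y^\ba$ into $T(x)\SJ(x) = \N^n(x)$, and then apply the power-series form of \eqref{eq:Lba} to recover $\N^{n+m-1}$. The one thing you add beyond the paper's argument is the explicit check that $T^*$ and $\SJ^*$ are binary power series (so that the product identity genuinely certifies a direct-sum decomposition of sets); the paper leaves this implicit, and your coefficient argument is a clean and correct way to close that small gap.
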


   \begin{proof} Without loss of generality,  we assume $j=1$ in \eqref{eq:ext-2}.  We have
   $$
   \begin{array}{rl}
   T^*(y,\bar x) \SJ^*(y,\bar x) &=L_\ba(y) T(y^{\ba},\bar x)\SJ(y^{\ba}, \bar x)
   =L_\ba(y)\sum_{k\geq 0} y^{k\ba}\N^{n-1}(\bar x)\\
   &=\N^m(y)\N^{n-1}(\bar x)
   =\N^{n+m-1}(y, \bar x).
   \end{array}
   $$
 (The third equality is due to   \eqref{eq:Lba}.)  The lemma is proved.
   \end{proof}

In the rest of this section,  we  investigate the primitivity property.
We say a binary power series  $T(x_1,\dots, x_n)$ with $n\geq 2$ is  \emph{variable separable},  if
  there exist an integer $m<n$, a permutation $\tau$ on $\{2,\dots, n\}$,
and two sets  $F\in \N^m$, $G\in \N^{n-m}$  such that
\begin{equation}\label{eq:variable}
T(x_1,\dots, x_n)=F(x_1,x_{\tau(2)},\dots, x_{\tau(m)})G(x_{\tau(m+1)},\dots, x_{\tau(n)}).
\end{equation}

The following lemma is a direct consequence of  Lemma \ref{lem:2.1}.

\begin{lemma} An  $\N^n$-pair $(T,\SJ)$ with $n\geq 2$  is primitive  if and only if $T(x)$
   is not  variable separable.
   \end{lemma}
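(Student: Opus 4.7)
The plan is to prove both directions by translating between the product structure on sets and the factorization of the generating polynomial, with Lemma \ref{lem:2.1} doing the main work.

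For the ``if'' direction, suppose $T(x)$ is variable separable as in \eqref{eq:variable}. Let $\sigma$ be the permutation of $\{1,\dots,n\}$ fixing $1$ and acting as $\tau$ on $\{2,\dots,n\}$. After relabelling the coordinates by $\sigma$, the polynomial identity \eqref{eq:variable} means exactly that the set $T$ becomes $F\times G\subset \N^m\times\N^{n-m}$, i.e.\ $T$ is a Cartesian product of an $m$-dimensional and an $(n-m)$-dimensional subset of $\N^*$ with $1\leq m<n$. Applying Lemma \ref{lem:2.1} to the $\N^n$-pair $(T,\SJ)$ (in the relabelled coordinates), we conclude that $\SJ$ also splits as $J_1\times J_2$ where $(F,J_1)$ and $(G,J_2)$ are $\N^m$- and $\N^{n-m}$-pairs respectively. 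Thus $(T,\SJ)$ is a Cartesian product of two lower-dimensional $\N^*$-pairs, so not primitive.

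For the ``only if'' direction, suppose $(T,\SJ)$ is non-primitive. By definition there exist $p,q\geq 1$ with $p+q=n$, an $\N^p$-pair $(I_1,J_1)$, an $\N^q$-pair $(I_2,J_2)$, and a permutation $\pi$ of the coordinates of $\N^n$, such that applying $\pi$ to $(T,\SJ)$ gives $(I_1\times I_2,\,J_1\times J_2)$. By swapping the roles of the two factors if necessary, we may assume that the coordinate originally indexed by $1$ lands in the first block under $\pi$. Writing this out at the level of generating series yields
\[
T(x_1,\dots,x_n)=I_1(x_1,x_{\tau(2)},\dots,x_{\tau(m)})\,I_2(x_{\tau(m+1)},\dots,x_{\tau(n)})
\]
for an appropriate $m$ with $1\leq m<n$ and an appropriate permutation $\tau$ of $\{2,\dots,n\}$. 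This is precisely the definition of variable separable with $F=I_1$ and $G=I_2$.

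There is essentially no obstacle here: once Lemma \ref{lem:2.1} is available, both implications reduce to bookkeeping about where coordinate $1$ sits relative to an arbitrary product decomposition. The only mild subtlety is the asymmetry between the definitions — ``variable separable'' forces $x_1$ into the first factor, while Cartesian-product primitivity is symmetric in the two factors — which is resolved by simply interchanging $F\leftrightarrow G$ (equivalently $I_1\leftrightarrow I_2$).
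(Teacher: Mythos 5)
Your proof is correct and is essentially the paper's intended argument: the paper itself states only that the lemma ``is a direct consequence of Lemma~\ref{lem:2.1}'' without spelling out the details, and your write-up supplies exactly that bookkeeping—using Lemma~\ref{lem:2.1} for the forward implication and unwinding the definitions (together with the observation that one may swap factors to put coordinate~$1$ in the first block) for the converse.
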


By convention, for $T\subset \N$,
we say $T(x)$ is \emph{variable separable} if and only if $\#T=1$.

 \begin{lemma}\label{lem:separable} Let $T\subset \N^n$. The following three statements are equivalent to each other:

(i) $T(x_1,x_2,\dots, x_n)$ is  variable separable.

(ii) $T(yz,x_2,\dots, x_n)$ is  variable separable.

(iii) $T(x_1^m, x_2,\dots, x_n)$ is  variable separable for some $m\geq 2$.
\end{lemma}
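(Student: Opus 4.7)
The plan is to establish the two easy directions (i) $\Rightarrow$ (ii) and (i) $\Rightarrow$ (iii) by direct substitution, and to prove the converses at the level of the underlying sets. If $T(x_1, \dots, x_n) = F(x_1, x_{\tau(2)}, \dots, x_{\tau(k)})\, G(x_{\tau(k+1)}, \dots, x_{\tau(n)})$ is the presumed factorization, then substituting $x_1 \mapsto yz$ keeps the variable blocks $\{y, z, x_{\tau(2)}, \dots, x_{\tau(k)}\}$ and $\{x_{\tau(k+1)}, \dots, x_{\tau(n)}\}$ disjoint, and substituting $x_1 \mapsto x_1^m$ preserves the original blocks exactly. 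The $n = 1$ convention is also preserved, since $T = \{k_0\}$ gives $T(yz) = y^{k_0} z^{k_0}$ and $T(x^m) = x^{m k_0}$.

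For (iii) $\Rightarrow$ (i), I will work with the set $T'' = \{(m\alpha, a_2, \dots, a_n) : (\alpha, a_2, \dots, a_n) \in T\}$ associated to the series $T(x_1^m, x_2, \dots, x_n)$. If $T''$ admits a variable separation $T'' = U \times V$ (with $x_1$ in the $U$-block, as the definition demands), then every element of $U$ has first coordinate divisible by $m$; dividing this coordinate by $m$ yields a set $\tilde U$ with $T = \tilde U \times V$, which is the required separation of $T$.

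For (ii) $\Rightarrow$ (i), the set underlying $T(yz, x_2, \dots, x_n)$ is $T' = \{(\alpha, \alpha, a_2, \dots, a_n) : (\alpha, a_2, \dots, a_n) \in T\} \subset \N^{n+1}$, which lies in the diagonal $\{y = z\}$. I will split into cases according to whether $y$ and $z$ lie in the same block of the separating partition, or in opposite blocks. In the same-block case the diagonal constraint descends directly to a factorization of $T$ after identifying the $y$- and $z$-coordinates with $x_1$. In the opposite-block case, the rigidity of a Cartesian product contained in a diagonal forces both $y$- and $z$-projections to be singletons, hence the first coordinate of $T$ is constant and $T$ splits trivially as $\{\alpha_0\} \times U_1 \times V_1$.

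The main obstacle is this opposite-block case of (ii) $\Rightarrow$ (i): the key observation is that if $A \times B \subset \{y = z\}$ then the first coordinate of every element of $A$ and every element of $B$ must equal one fixed integer $\alpha_0$. This observation also absorbs the $n = 1$ case of (ii), where the diagonal condition on a product $A(y) B(z) = \sum_i y^{k_i} z^{k_i}$ forces $\#T = 1$. All other steps reduce to routine substitution or the rescaling of a single coordinate.
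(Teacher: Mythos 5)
Your proposal is correct and follows essentially the same route as the paper: the trivial substitutions for (i)$\Rightarrow$(ii),(iii), the divisibility-by-$m$ observation on the $x_1$-exponents for (iii)$\Rightarrow$(i), and for (ii)$\Rightarrow$(i) the case split on whether $y$ and $z$ land in the same or opposite factors. The paper compresses the (ii)$\Rightarrow$(i) conclusion into the substitution $y=1,\ z=x_1$, while you make explicit the diagonal-support constraint (and the rigidity of a Cartesian product inside the diagonal) that justifies that substitution; this is a slightly more careful rendering of the same argument rather than a different one.
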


\begin{proof} (i)  implies (ii) and (i) implies (iii) are trivial. In the following,  we prove the other direction implications.

 (ii)$\Rightarrow$(i).  Let us first consider that case $n=1$. If $T(yz)$ is variable separable,
 then $T(yz)=F(y)G(z)$. Set $z=1$, we obtain $T(y)=F(y)G(1)$. Since both $T(y)$ and $F(y)$
 are binary, we obtain that $G(1)=1$.
  Similarly, we have $F(1)=1$. Hence $T(1)=1$, which means $\#T=1$ and $T(x)$ is variable separable.

 Now we assume $n\geq 2$.  Suppose $T(yz, x_2,\dots, x_n)$ is variable separable,  then there is a permutation $\tau$ on $\{2,\dots, n\}$, and two binary power serieses $F$ and $G$ such that either
$$
 T(yz, x_2,\dots, x_n)=F(y, x_{\tau(2)},\dots, x_{\tau(k)})G(z, x_{\tau(k+1)},\dots, x_{\tau(n)}) \text{ with $k\leq n$, or }
$$
$$
T(yz, x_2,\dots, x_n)=F(y, z, x_{\tau(2)},\dots, x_{\tau(k)})G(x_{\tau(k+1)},\dots, x_{\tau(n)}) \text{ with $k<n$. }
$$
Set $y=1$, $z=x_1$,
we obtain
a variable separable factorization of $T(x_1,\dots, x_n)$.

(iii)$\Rightarrow$(i).  If $n=1$, the implication is trivial. Now
we assume that $n\geq 2$. Suppose   $T(x_1^m,x_2,\dots, x_n)$ is variable separable,
 then we have
\begin{equation}\label{eq:T2}
T(x_1^m, x_2,\dots, x_n)=F(x_1,x_{\tau(2)} \dots, x_{\tau(k)})G(x_{\tau(k+1)},\dots, x_{\tau(n)})
\end{equation}
where $k<n$.
Since the power of $x_1$ of any term in $F$ must be a multiple of $m$, we infer that there exists
$F^*\in \N^n$ such that
\begin{equation}\label{eq:T3}
F(x_1,x_{\tau(2)},\dots, x_{\tau(k)})=F^*(x_1^m, x_{\tau(2)},\dots, x_{\tau(k)}).
\end{equation}
 Substituting  \eqref{eq:T3} into \eqref{eq:T2}, and then replacing $x_1^m$ by $x_1$,
 we obtain a variable separable factorization of $T(x_1,\dots, x_n)$.
\end{proof}

\begin{thm}\label{thm:separable}
Let $(T^*,\SJ^*)$ be an extension of  $(T,\SJ)$. If the extension is not illegal, then
 $(T^*,\SJ^*)$ is primitive if and only if $(T,\SJ)$  is primitive.
\end{thm}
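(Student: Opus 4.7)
The strategy is to use the lemma just before the theorem: for $n\geq 2$ a pair is primitive if and only if $T(x)$ is not variable separable, while every $\N$-pair is automatically primitive. The easy direction, ``$T$ variable separable implies $T^*$ variable separable'', is uniform: given $T(x)=F\cdot G$ with the substitution variable $x_j$ (Type 1) or the new variable group $y$ (Type 2) in the $F$-block, absorb the extra one-variable factor $C(x_j)$ or $L_\ba^\delta(y)$ into $F$; binariness of the enlarged factor follows from the direct-sum decompositions $\{0,\dots,N-1\}\oplus N\N=\N$ (Type 1) or $L_\ba\oplus\N\ba=\N^m$ (Type 2).

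For the Type 1 converse (with $n\geq 2$), suppose $T^*=P\cdot Q$ with $x_j$ in the $P$-block. Extracting $[x_j^k]$ on both sides and comparing with
\[
[x_j^k]T^*(x)=\begin{cases}T_\ell(\bar x,\bar{\bar x})&\text{if }k=c+N\ell,\ c\in C,\\ 0&\text{otherwise,}\end{cases}
\]
where $T_\ell:=[x_j^\ell]T$ is a power series in $(\bar x,\bar{\bar x})$, forces $P_{c+N\ell}$ to be independent of $c\in C$. The common value $R_\ell$ assembles into a binary series $\tilde P$ with $T=\tilde P\cdot Q$. For the Type 2 converse, write $T^*=P\cdot Q$ and split the new variables $y_1,\dots,y_m$ between the two blocks as $y'=(y_i)_{i\in P}$ and $y''=(y_i)_{i\in Q}$, with $p=|y'|$. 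When $p\in\{0,m\}$, the analogous $y$-coefficient extraction recovers a variable-separable factorization of $T$.

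The hard sub-case is $1\le p<m$, where $y$ is genuinely split. From $L_\ba=\{l\in\N^m:l_i<a_i\text{ for some }i\}$ and $a_i\ge 1$ one reads off $L_\ba(y',0)=\N^p(y')$ and $L_\ba(0,y'')=\N^{m-p}(y'')$; specializing $PQ=T^*$ at $y'=0$, $y''=0$, and $y=0$ produces three identities whose cross-product forces $T^*=\N^m(y)\cdot T_0$ when $\delta=1$, where $T_0$ is the $x_j=0$ slice of $T$. Inverting the substitution $x_j\leftrightarrow y^\ba$ via $L_\ba\oplus\N\ba=\N^m$ then yields $T=\N(x_j)\cdot T_0$, a variable-separable factorization; the $\delta=0$ case analogously forces every element of $T$ to share the same $j$-th coordinate, giving $T=\{t^0_j\}\times T'$. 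When the original dimension is $n=1$, the same analysis collapses to $T=\N$ (if $\delta=1$) or $T=\{0\}$ (if $\delta=0$)---exactly the two illegal configurations, which are ruled out by hypothesis. The main obstacle is this split sub-case, where $L_\ba^\delta(y)$ couples all $m$ new variables and must be disentangled via the direct-sum identity $\N^m=L_\ba\oplus\N\ba$.
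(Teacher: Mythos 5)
Your proof is correct, and it takes a somewhat different route from the paper's. The paper first establishes a separate lemma (Lemma~\ref{lem:separable}) saying that variable separability of a binary power series is preserved under the substitutions $x_1\mapsto yz$ and $x_1\mapsto x_1^m$, and then proves Theorem~\ref{thm:separable} by a short chain of equivalences invoking that lemma (the Type~2 substitution $x_j\mapsto y^\ba$ being a composition of such moves). You bypass that lemma and argue directly on the given factorization $T^*=PQ$ by extracting coefficients in the substituted variable(s) and using the direct-sum structure $C\oplus N\N$ (Type~1) and $L_\ba\oplus\N\ba=\N^m$ (Type~2) to reassemble a separable factorization of $T$; for the genuinely split Type~2 case $1\le p<m$ you use the observation $L_\ba(y',\bzero)=\N^p(y')$ and a cross-product of the specializations $y'=\bzero$, $y''=\bzero$, $y=\bzero$, which is a cleaner and more explicit treatment than the paper's (whose Lemma~\ref{lem:separable}(ii) proof instead specializes at $y=1$, $z=x_1$). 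Your handling of the $n=1$ endgame, identifying $T=\N$ (with $\delta=1$) and $T=\{\bzero\}$ (with $\delta=0$) as exactly the excluded illegal configurations, matches the paper's. The two approaches have the same underlying mechanism (uniqueness of decompositions coming from the direct sums), but yours is more self-contained and does not require isolating Lemma~\ref{lem:separable} as a separate statement, at the cost of a longer case analysis inside the theorem's proof.
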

\begin{proof} 
(i) Let $(T^*,\SJ^*)$ be a first type extension of $(T,\SJ)$ satisfying
 $T^*(x)=C(x_1)T(x_1^N,{\bar x})$.
If $n=1$, then both $(T,\SJ)$ and $(T^*,\SJ^*)$ are primitive. Now we  assume $n\geq 2$.
By Lemma \ref{lem:separable}, we have
$$
\begin{array}{rl}
  \text{$(T^*, \SJ^*)$ is non-primitive} &\Leftrightarrow  \text{$T^*(x)$ is variable separable}\\
  & \Leftrightarrow  \text{$T(x_1^N,\bar x)=T^*(x)/C(x_1)$ is variable separable}\\
  & \Leftrightarrow \text{ $T(x)$ is variable separable $\Leftrightarrow$ $(T,\SJ)$ is non-primitive.}
\end{array}
$$
The theorem holds in this case.

(ii) Let $(T^*,\SJ^*)$ be a second type extension of $(T,\SJ)$ with parameters $\delta$ and $\ba$.
Let us  assume $\delta=0$, then
$T^*(y,\bar x)=T(y^{\ba}, \bar x),$
where $y=(y_1,\dots, y_m)$. Similar as above,
$(T^*, \SJ^*)$ is non-primitive is equivalent to  $T(x)$ is variable separable.
If $n>1$, it is equivalent to   $(T, \SJ)$ is non-primitive; if $n=1$,
it is equivalent to  $(T, \SJ)=(\{0\},\N)$,  which means the extension is illegal.
The theorem is proved.
\end{proof}

\section{\textbf{$\N^*$-pairs of pure type}}\label{sec:pure type}
Let $\be_1,\dots, \be_n$ be the canonical basis of $\Z^n$.  Let $n\geq 2$, set
 \begin{equation}\label{eq:L0}
L_0 =\{(a_1,\dots, a_n)\in \N^n;\  \text{at least one  $a_j$ is $0$}\}
 \end{equation}
 to be the union of $(n-1)$-faces of $\N^n$.
 An $\N^n$-pair $(T,\SJ)$ with $n\geq 2$ is said to be  of \emph{pure type}, if
either $L_{0}\subset T$ or $L_{0}\subset \SJ$.
In this section, we characterize   $\N^*$-pairs of pure type.

Let $\ba$, $\bb\in \N^n$.
We say $\ba\leq \bb$ if $\bb-\ba\in \N^n$, and $\ba<\bb$ if $\bb-\ba\in (\N\setminus\{0\})^n$.

\begin{lemma}\label{lem:L0} If $(T,\SJ)$ is an $\N^n$-pair  such that $L_0\subset \SJ$,  then
$<$ induces a linear  order on $T$.  We call $\ba$ the \emph{germ} of $T$,
 where $\ba$ is the  minimum of $T\setminus\{\bzero\}$  with respect to $<$.
\end{lemma}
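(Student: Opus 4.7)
The plan is to prove both claims—linearity of $<$ on $T$, and existence of the germ—by exploiting the uniqueness of the $T\oplus \SJ$ decomposition at a carefully chosen ``join'' point, where the hypothesis $L_0\subset \SJ$ supplies ready-made elements of $\SJ$.

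For linearity, I would take distinct $\ba,\bb\in T$ and form the componentwise maximum $\bp=(\max(a_k,b_k))_{1\le k\le n}\in \N^n$. Then $\bp-\ba$ and $\bp-\bb$ both lie in $\N^n$, and they are distinct since $\ba\ne \bb$. Arguing by contradiction, suppose neither $\ba<\bb$ nor $\bb<\ba$ holds. The failure of $\ba<\bb$ gives an index $k$ with $a_k\ge b_k$, hence $(\bp-\ba)_k=\max(a_k,b_k)-a_k=0$, so $\bp-\ba\in L_0$; symmetrically, $\bp-\bb\in L_0$. Since $L_0\subset \SJ$, both $(\ba,\bp-\ba)$ and $(\bb,\bp-\bb)$ express $\bp$ as a sum of an element of $T$ and an element of $\SJ$, contradicting the uniqueness of decomposition in $T\oplus \SJ=\N^n$.

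For the germ, once linearity is established I would fix any $\ba_0\in T\setminus\{\bzero\}$ and observe that every $\bb\in T\setminus\{\bzero,\ba_0\}$ satisfies $\bb<\ba_0$ or $\ba_0<\bb$. In the former case each coordinate obeys $b_k<a_{0,k}$, so
\[
\{\bb\in T\setminus\{\bzero\}\ :\ \bb<\ba_0\}\cup\{\ba_0\}\ \subset\ \prod_{k=1}^{n}\{0,1,\dots,a_{0,k}\},
\]
which is finite. A nonempty finite totally ordered set has a minimum $\ba$; by the trichotomy, $\ba$ is smaller (in $<$) than every other element of $T\setminus\{\bzero\}$, so $\ba$ is the required germ.

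I do not anticipate a serious obstacle: the whole argument rests on the double-decomposition trick at $\bp$, which is tailored to exploit $L_0\subset \SJ$. The one subtlety worth flagging is the strict convention that $\ba<\bb$ demands $b_k-a_k\ge 1$ in \emph{every} coordinate, so the negation $\ba\not<\bb$ correctly produces a coordinate where $\bp-\ba$ vanishes—this is precisely what feeds $\bp-\ba$ into $L_0$ and then into $\SJ$.
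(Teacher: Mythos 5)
Your argument for linearity is essentially the paper's proof in different notation: you form the componentwise maximum $\bp=\ba\vee\bb$ and observe that the failure of both strict inequalities forces $\bp-\ba$ and $\bp-\bb$ into $L_0\subset\SJ$, yielding two decompositions of $\bp$. The paper writes the same thing as $\bb+\bc=\ba+\bc'$ with $\bc=(\ba-\bb)\vee\bzero$ and $\bc'=(\bb-\ba)\vee\bzero$, which are exactly your $\bp-\bb$ and $\bp-\ba$. One small addition you make that the paper leaves implicit: you actually verify that a minimum of $T\setminus\{\bzero\}$ exists (by trapping all smaller elements in a finite box $\prod_k\{0,\dots,a_{0,k}\}$), whereas the paper simply asserts the germ exists. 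That is a worthwhile detail to fill in, and your argument for it is correct.
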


\begin{proof}Denote $a\vee b=\max\{a,b\}$, and $(a_1,\dots, a_n)\vee(b_1,\dots,b_n)=(a_1\vee b_1,\dots, a_n\vee b_n)$.

 Let  $\ba,\bb\in T$. Since $L_0\subset \SJ,$  $\ba+L_0$ and $\bb+L_0$
do not overlap.  If neither  $\ba-\bb>0$  nor $\bb-\ba>0$, then
$\bc:=(\ba-\bb)\vee \bzero\in L_0$ and  $\bc':=(\bb-\ba)\vee \bzero \in L_0$.
It follows that $\bb+\bc=\ba+\bc'$, a contradiction.
\end{proof}


\begin{lemma} \label{lemma:wall}
Let $(T,\SJ)$ be an $\N^n$-pair with $L_0\subset \SJ$, and let $\ba$ be the germ of $T$. Then

 (i) $L_\ba\subset \SJ$;

 (ii) for every integer $k\geq0$,  $(k\ba+L_{\ba})\cap T$ contains at most one element.
\end{lemma}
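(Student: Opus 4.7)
The plan is to prove (i) by a direct ``dominance'' argument using the germ property, and then to deduce (ii) from (i) together with the uniqueness of the decomposition $\N^n = T \oplus \SJ$.

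For part (i), I would take an arbitrary $z \in L_\ba$ and write its (unique) decomposition $z = t + s$ with $t \in T$, $s \in \SJ$; the task is to force $t = \bzero$. By Lemma \ref{lem:L0}, $\ba$ is the $<$-minimum of $T \setminus \{\bzero\}$, so for every $t' \in T\setminus\{\bzero\}$ either $t' = \ba$ or $\ba < t'$ strictly; in either case $t' - \ba \in \N^n$, i.e., $t' \geq \ba$ coordinatewise. (Comparing $\bzero$ with $\ba$ in this way also confirms $\ba \in (\N\setminus\{0\})^n$.) Hence if $t \neq \bzero$ then $z = t + s \geq t \geq \ba$ coordinatewise, so $z \in \N^n + \ba$, contradicting $z \in L_\ba = \N^n \setminus (\N^n + \ba)$. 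Therefore $t = \bzero$ and $z = s \in \SJ$.

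For part (ii), I argue by contradiction: suppose distinct $t_1, t_2$ both lie in $(k\ba + L_\ba)\cap T$. By the linear order of Lemma \ref{lem:L0} we may assume $t_1 < t_2$, so $t_2 - t_1 \in (\N\setminus\{0\})^n$. Writing $t_i = k\ba + \ell_i$ with $\ell_i \in L_\ba$, we have $t_2 - t_1 = \ell_2 - \ell_1$. The key estimate is that $t_2 - t_1 \in L_\ba$: otherwise $\ell_2 - \ell_1 \geq \ba$ coordinatewise, giving $\ell_2 \geq \ell_1 + \ba \geq \ba$ and hence $\ell_2 \in \N^n + \ba$, contradicting $\ell_2 \in L_\ba$. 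Thus, by (i), $t_2 - t_1 \in \SJ$ and is nonzero. But then $t_2 = t_2 + \bzero = t_1 + (t_2 - t_1)$ provides two distinct $T \oplus \SJ$-decompositions of $t_2$, violating uniqueness. The only mildly delicate ingredient is the coordinate-wise bookkeeping that links the strict order $<$, the shell $L_\ba$, and the complement $\N^n + \ba$; once (i) is in hand, (ii) reduces to the standard uniqueness argument with $t_2 - t_1$ supplying the forbidden overlap.
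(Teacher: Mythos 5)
Your proof is correct and follows essentially the same route as the paper: for (i) you spell out the ``dominance'' argument that the paper leaves as obvious (the germ being the $<$-minimum of $T\setminus\{\bzero\}$ forces $z\geq\ba$ whenever $z$ has a nonzero $T$-component, contradicting $z\in L_\ba$), and for (ii) you show $t_2-t_1\in L_\ba\subset\SJ$ using the downward-closedness of $L_\ba$, which is exactly the paper's observation that $\bzero\leq u-u'\leq u-k\ba\in L_\ba$, and then conclude by uniqueness of the $(T,\SJ)$-decomposition of $t_2$.
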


\begin{proof} (i) is obvious. Now we prove (ii). Suppose on the contrary $u,u'\in (k\ba+L_{\ba})\cap T $. We assume that
$u'<u$ (Lemma \ref{lem:L0}). Then $u-u'\in L_\ba$ since $\bzero \leq u-u'\leq u-k\ba$.
 So $u=(u)+\bzero$ and $u=(u')+(u-u')$ are two different $(T,\SJ)$-decompositions of $u$, which is a contradiction.
\end{proof}

The following  result  is an extension  of Lemma \ref{lem:sands} (the case $n=1$) and
  Niven \cite[Lemma 5]{Niven} ( the case $n=2$) to higher dimensions.

 \begin{thm}\label{thm:pure type} Let $n\geq 2$, let $(T, \SJ)$ be an $\N^n$-pair of pure type with $L_0\subset \SJ$, and  let $\ba$ be the germ of $T$.
  Then there exists an $\N$-pair $(A, B)$ such that
  $$
  T= A\ba, \quad \SJ=B\ba \oplus L_\ba.
  $$
  \end{thm}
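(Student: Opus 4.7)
The plan is to prove the theorem in two stages: first, show $T\subset\N\ba$ (so that $T=A\ba$ for $A:=\{k\in\N:k\ba\in T\}$); second, use the generating-function identity $T(x)\SJ(x)=\N^n(x)$ together with the factorization $\N^n(x)=L_\ba(x)\N(x^\ba)$ (which comes from $\N^n=L_\ba\oplus\N\ba$, see \eqref{eq:Lba}) to deduce the form of $\SJ$.

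The first stage is the main obstacle. By Lemma \ref{lem:L0}, the order $<$ linearly orders $T$; enumerate $T=\{t_0<t_1<t_2<\cdots\}$ with $t_0=\bzero$ and $t_1=\ba$. I prove $t_k\in\N\ba$ by induction on $k$. For the inductive step, assume $t_0,\dots,t_{k-1}\in\N\ba$ and, arguing by contradiction, write $t_k=p\ba+\ell^*$ with $\ell^*\in L_\ba\setminus\{\bzero\}$. Two quick reductions: $p\geq 1$ (otherwise $t_k=\ell^*\in L_\ba\subset\SJ$ by Lemma \ref{lemma:wall}(i), forcing $t_k\in T\cap\SJ=\{\bzero\}$), and $p\ba\notin T$ (otherwise $(t_k,\bzero)$ and $(p\ba,\ell^*)$ would be two distinct $(T,\SJ)$-decompositions of $t_k$). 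Using the induction hypothesis together with the componentwise bound $\ell^*\not\geq\ba$, I argue that for each $0\leq m\leq p-1$ the unique $(T,\SJ)$-decomposition of $m\ba+\ell^*$ must have $T$-part of the form $a\ba$ with $a\in\widetilde A:=A\cap[0,p-1]=\{m:m\ba\in\{t_0,\dots,t_{k-1}\}\}$. This turns the system of decompositions into an interval-pair equation $\widetilde A\oplus\widetilde F=\{0,1,\ldots,p-1\}$, where $\widetilde F:=\{u:u\ba+\ell^*\in\SJ\}\cap[0,p-1]$. On the other hand, the unique decomposition of $t_k$ as $(t_k,\bzero)$ forbids $(p-a)\ba+\ell^*\in\SJ$ for every $a\in\widetilde A$, while the decomposition of $t_k-\ba$ selects a unique $a^*\in\widetilde A$ with $(p-1-a^*)\ba+\ell^*\in\SJ$. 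Combining these constraints with the interval-pair structure (and, if needed, the explicit classification given by Lemma \ref{rem:N-pair}) produces the desired contradiction.

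For the second stage, $T=A\ba$ gives $T(x)=A(x^\ba)$. Using the bijection $\N^n=L_\ba\oplus\N\ba$, every binary series supported in $\N^n$ decomposes uniquely as $\sum_{\ell\in L_\ba}x^\ell\tau_\ell(x^\ba)$ for some $0$-$1$ series $\tau_\ell$; write $\SJ(x)=\sum_{\ell\in L_\ba}x^\ell\sigma_\ell(x^\ba)$. Substituting into $T(x)\SJ(x)=L_\ba(x)\N(x^\ba)$ and equating the coefficient of $x^\ell$ on each side (as formal series in $y=x^\ba$) yields $A(y)\sigma_\ell(y)=\N(y)$ for every $\ell\in L_\ba$. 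Setting $B:=\sigma_{\bzero}$ shows $(A,B)$ is an $\N$-pair, and Lemma \ref{lem:unique}(i) in one variable then forces $\sigma_\ell=B$ for every $\ell$. Hence $\SJ(x)=L_\ba(x)B(x^\ba)$, i.e., $\SJ=B\ba\oplus L_\ba$, with the sum direct because $\N\ba\oplus L_\ba=\N^n$.

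The main obstacle is the inductive step in the first stage. The challenge is that each unique-decomposition equation supplies only a single constraint, so one must orchestrate many decompositions (those of $t_k$, of $t_k-\ba$, and of every $m\ba+\ell^*$ with $m<p$) into a single numerical inconsistency. A technical point absent in dimension one is ruling out contributions from $T$-elements that exceed $t_k$ in the $<$-order; this uses the componentwise bound $\ell^*\not\geq\ba$ from $\ell^*\in L_\ba$, mirroring the geometric argument in Niven's two-dimensional proof.
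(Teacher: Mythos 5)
Your Stage 2 (deducing the form of $\SJ$ once $T\subset\N\ba$ is known) is correct and a clean alternative to the paper's final step: the decomposition $\SJ(x)=\sum_{\ell\in L_\ba}x^\ell\sigma_\ell(x^\ba)$, the equation $A(y)\sigma_\ell(y)=\N(y)$ for each $\ell$, and the uniqueness lemma together force $\sigma_\ell\equiv B$. That part is fine.

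Stage 1, however, has a genuine gap. You set up $\widetilde A=\{m:m\ba\in\{t_0,\dots,t_{k-1}\}\}$ and $\widetilde F=\{u\in[0,p-1]:u\ba+\ell^*\in\SJ\}$ and assert an ``interval-pair equation'' $\widetilde A\oplus\widetilde F=\{0,\dots,p-1\}$. But the decompositions of $m\ba+\ell^*$ ($0\le m\le p-1$) only give the inclusion $\{0,\dots,p-1\}\subset\widetilde A\oplus\widetilde F$, with uniqueness of representation for those $p$ elements; nothing prevents $\widetilde A+\widetilde F$ from containing integers larger than $p$, so $(\widetilde A,\widetilde F)$ need not be an interval pair, and Lemma \ref{rem:N-pair} cannot be invoked. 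More importantly, the constraints you actually extract --- that $p-a\notin\widetilde F$ for $a\in\widetilde A$, and that there is a unique $a^*\in\widetilde A$ with $p-1-a^*\in\widetilde F$ --- are mutually consistent (for instance $p=4$, $\widetilde A=\{0,2\}$, $\widetilde F=\{0,1\}$ satisfies all of them), so no ``numerical inconsistency'' emerges. You have recorded constraints on $T\cap\N\ba$ and on $\SJ$ along the single ray $\ell^*+\N\ba$, but the contradiction lives off that ray: one needs to know the position of $\SJ$ at a neighbouring point $t_k-\be_i=p\ba+(\ell^*-\be_i)$, which your bookkeeping never reaches.

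This is precisely what the paper's proof supplies and yours omits. The paper runs the induction \emph{simultaneously} on two claims over the sections $\Omega_k=k\ba+L_\ba$ --- that $T\cap\Omega_k=\{k\ba\}$ or $\emptyset$, \emph{and} that $\SJ\cap\Omega_k=\Omega_k$ or $\emptyset$ --- and then produces the contradiction locally: if $\bc=T\cap\Omega_k\ne\{k\ba\}$, pick $i$ with $\bc-\be_i\in\Omega_k$; the coverage relation \eqref{eq:CDrom} (which follows from the $\SJ$ half of the inductive hypothesis) forces $\bc-\be_i\in\SJ$, and then $\ba+(\bc-\be_i)=\bc+(\ba-\be_i)$ gives two $(T,\SJ)$-decompositions of one point. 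Your plan tracks only the $T$-side structure (plus $\SJ$ restricted to one ray), which is why the final step does not close. To repair it you would have to additionally prove, by the same section-wise induction, that $\SJ\cap\Omega_j$ is all of $\Omega_j$ or empty for $j<p$, and then apply the nearest-neighbour trick to $t_k-\be_i$; at that point you have essentially reconstructed the paper's argument, and the interval-pair apparatus is superfluous.
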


   \begin{proof}
  For $k\geq 0$,  denote $\Omega_k=k\ba+L_\ba$, and we call it the $k$-th section of $\N^n$.
   For every integer $k\geq 0$,  we claim that

 $(i)$  $T\cap \Omega_k=\{k\ba\} \text{ or } \emptyset;$  \ $(ii)$ $\SJ\cap \Omega_k=\Omega_k \text{ or } \emptyset.$

By Lemma \ref{lemma:wall},  $\bzero,\ba\in T$ and $L_\ba\subset \SJ$, so the claim holds for $k=0$ .
  Suppose   the claim is false and  let
  $k\geq 1$  be the smallest    integer such that (i) or (ii) fails.

    First,  by the minimality of $k$, we have
  $$ T\cap \bigcup_{j=0}^{k-1}\Omega_j=U\ba,\quad \SJ\cap \bigcup_{j=0}^{k-1}\Omega_j=V\ba\oplus L_\ba$$
  where $U, V\subset\{0,1,\dots, k-1\}$.
Secondly,   we assert that
$$k\ba \not \in T \text{ and } k\not\in U+V.$$
 Suppose $k\ba\in T$, then $(k\ba)+(\bb)$, $\bb\in L_\ba$,  are the $(T,\SJ)$-decomposition
of elements in $k\ba+L_\ba$, and this forces that (i) and (ii) hold for $k$, a contradiction.
By the same reason, we have that  $k\not\in U+V$.

  Let $C=T\cap \Omega_k$ and $D=\SJ\cap \Omega_k$. Then $C$ contains at most one element.
  The set  $\Omega_k$ is covered by
  $$(T\cap \bigcup_{j=0}^{k}\Omega_j))\oplus (\SJ
  \cap \bigcup_{j=0}^{k}\Omega_j)=(U\ba\cup C)\oplus((V\ba+L_\ba)\cup D).$$
 Eliminating the elements apparently outside of $\Omega_k$, we have  that $\Omega_k$ is covered by
 $D\cup (C\oplus L_\ba)\cup((U+V)\ba+L_\ba)$. Since $k\not\in U+V$, we finally obtain
  \begin{equation}\label{eq:CDrom}
  \Omega_k\subset (C\oplus L_\ba)\cup D,
  \end{equation}
  which implies that both $C$ and $D$ are not empty.
%
%

 Let us denote $C=\{\bc\}$. Since $\bc\neq k\ba$, there exists $i$ such that $\bc-\be_i\in \Omega_k$,
 so $\bc-\be_i\in D$ by \eqref{eq:CDrom}. Since $\ba\in T$ and $\ba-\be_i\in \SJ$,
$$
\ba+(\bc-\be_i)=\bc+(\ba-\be_i)
$$
provides two $(T,\SJ)$-decompositions of a same point. This contradiction proves our  claim.

Our claim implies that $T=A \ba $  and $\SJ= B\ba+L_\ba$ for some $A, B\subset \N$.
From  $A \ba \oplus  B \ba \oplus L_\ba=T\oplus \SJ=\N \ba\oplus L_\ba,$
 we deduce that $(A,B)$ is an $\N$-pair. The theorem is proved.
  \end{proof}

\section{\textbf{A key Lemma}}\label{sec:key-lemma}

In this section we prove a crucial lemma which  will be used in Section \ref{sec:marginal}.

Let $n\geq 1$   and write  $z=(z_1,\dots, z_n)$.

\begin{lemma}\label{lem:high-dim}   Let $\ba>0$ be a point in $\N^n$.
Let $A_0, B_0\subset \N$ with $0,1\in A_0$ and $0\in B_0$,
and denote
 $F_0= A_0 \ba$,  $G_0=B_0 \ba \oplus L_\ba.$
Let $\Omega\subset \N \ba$.
 If  $F_1,G_1$ are two subsets of $\N^n$ such that
 \begin{equation}\label{eq:sec}
 F_0(z)G_1(z)+F_1(z)G_0(z)=L_\ba(z)\Omega(z),
 \end{equation}
 then
 $F_1=A_1 \ba$ and $G_1=B_1 \ba\oplus L_\ba$
 for some $A_1, B_1\subset \N$.
\end{lemma}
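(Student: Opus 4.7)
The plan is to establish the claim by induction on the ``frame index'' $k\ge 0$, where the $k$-th frame $V_k := k\ba + L_\ba \subset \N^n$ gives, via~\eqref{eq:Lba}, a partition $\N^n = \bigsqcup_{k\ge 0} V_k$. At level $k$ the inductive assertion is
\[
F_1 \cap V_k \in \{\emptyset, \{k\ba\}\} \quad\text{and}\quad G_1 \cap V_k \in \{\emptyset, V_k\}.
\]
Once this is proved for every $k$, setting $A_1 := \{k : k\ba \in F_1\}$ and $B_1 := \{k : V_k \subset G_1\}$ gives $F_1 = A_1\ba$ and $G_1 = B_1\ba \oplus L_\ba$, and the remaining scalar identity $A_0 B_1 + B_0 A_1 = \tilde\Omega$ is automatically compatible.

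First I would rewrite the hypothesis in generating function form. Since $F_0(z) = A_0(z^\ba)$, $G_0(z) = L_\ba(z) B_0(z^\ba)$ and $\Omega(z) = \tilde\Omega(z^\ba)$ for some $\tilde\Omega \subset \N$, the equation reads
\[
A_0(z^\ba)\, G_1(z) + B_0(z^\ba)\, L_\ba(z)\, F_1(z) = L_\ba(z)\, \tilde\Omega(z^\ba).
\]
The right-hand side has $\{0,1\}$-coefficients while both summands on the left have non-negative integer coefficients, so each summand is itself binary and the two have disjoint supports. In particular $F_1 \cap G_1 = \emptyset$, and $F_1, G_1 \subset L_\ba \oplus \Omega$.

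Next, at a generic point $\bp + k\ba$ with $\bp \in L_\ba$, I would extract the coefficient identity. Using the inductive hypothesis for frames $0,\ldots,k-1$, all past-frame contributions collapse into two quantities depending only on $k$, and the identity reduces to a relation among the unknown frame-$k$ data $[\bp + k\ba \in G_1]$, $[k\ba \in F_1]$, and the count $e_\bp := |\{\bq \in L_\ba \setminus \{\bzero\}: \bq \le \bp,\, \bq + k\ba \in F_1\}|$ of off-diagonal $F_1$-entries in the current frame with $L_\ba$-part below $\bp$. Setting $\bp = \bzero$ isolates the diagonal behaviour and, together with $F_1 \cap G_1 = \emptyset$, splits the analysis into three sub-cases: (i) $k\ba \notin F_1 \cup G_1$, which forces the whole frame to be empty; (ii) $k\ba \in F_1$, in which the binariness of $F_1 G_0$ at every $\bp + k\ba$ forces $e_\bp = 0$ and $G_1 \cap V_k = \emptyset$; (iii) $k\ba \in G_1$, the delicate case.

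The main obstacle is case (iii), where the equation alone admits a ``mixed-frame'' solution with an off-diagonal $\bq + k\ba \in F_1$ compensated by a hole $\bq + k\ba \notin G_1$. To rule this out, I would propagate the disturbance: using $1 \in A_0$ (so $\ba \in F_0$), the binariness of $F_0 G_1$ at $(k+1)\ba$ already forces $(k+1)\ba \notin G_1 \cup F_1$, and more generally the anomalous pattern of frame $k$ -- in which $G_1 \cap V_k$ equals $V_k$ minus the upper cone of $\bq$ -- translates, via the shift by $\ba$, into a forbidden lower cone of $V_{k+1}$ in $G_1$ together with symmetric constraints on $F_1 \cap V_{k+1}$. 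Iterating this frame-by-frame cascade either forces infinitely many off-diagonal entries in $F_1$ -- contradicting $F_1 \subset L_\ba \oplus \Omega$ together with $\Omega \subset \N\ba$ -- or produces a coefficient $\ge 2$ in $F_0 G_1$ or $F_1 G_0$, violating their binariness. Once case (iii) is closed the induction closes, and the lemma follows.
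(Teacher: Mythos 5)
Your overall framework—partition $\N^n$ into $\ba$-frames $V_k = k\ba + L_\ba$, deduce from the binary right-hand side that $F_0 \oplus G_1$ and $F_1 \oplus G_0$ are disjoint binary sets, and induct on $k$—is exactly the paper's strategy. The preliminary observations and cases (i)–(ii) are fine in spirit.

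The gap is in case (iii), which is the heart of the lemma, and your sketch there does not actually close. You propose a ``cascade'' argument: the mixed pattern in $V_k$ propagates via the shift by $\ba$ to $V_{k+1}$, and iterating this ``either forces infinitely many off-diagonal entries in $F_1$ -- contradicting $F_1 \subset L_\ba \oplus \Omega$ together with $\Omega \subset \N\ba$ -- or produces a coefficient $\ge 2$.'' But the first alternative is not a contradiction at all: $L_\ba \oplus \Omega$ is a union of full frames and is almost entirely off the diagonal $\N\ba$, so $F_1$ having infinitely many off-diagonal points is perfectly consistent with $F_1 \subset L_\ba \oplus \Omega$. The second alternative (``produces a coefficient $\ge 2$'') is asserted without an argument; you never pin down what the anomalous configuration in $V_{k+1}$ must look like, so there is nothing to iterate. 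In short, case (iii) is a plan, not a proof.

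The paper avoids the cascade entirely by a \emph{local} argument within the offending frame: after showing $(F_0\oplus G_1)\cap\Omega_k = C := G_1\cap\Omega_k$ is nonempty and proper, it takes the minimal off-diagonal $f^* \in F_1$, shows $f^*$ must lie in $\Omega_k$, picks a coordinate $i$ with $f^*-\be_i\in\Omega_k$, and observes that $f^*-\be_i$ cannot be covered by $F_1\oplus G_0$, hence $f^*-\be_i\in C\subset G_1$. Then
\[
\ba+(f^*-\be_i) \;=\; f^*+(\ba-\be_i),
\]
with the left side in $F_0\oplus G_1$ (using $1\in A_0$) and the right side in $F_1\oplus G_0$ (using $\ba-\be_i\in L_\ba\subset G_0$), contradicting disjointness. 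This single shift by $\ba$ and single coordinate decrement is what your sketch is groping toward; you should replace the open-ended cascade with this finite, local double-representation argument.
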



\begin{proof} The assumption \eqref{eq:sec} implies that
$$
(F_0\oplus G_1)\cup (F_1\oplus G_0)=\Omega\oplus L_\ba \text{ and } (F_0\oplus G_1)\cap(F_1\oplus G_0)=\emptyset.
$$

Denote $\Omega=( d_k\ba)_{k\geq 1},$
 where $(d_k)_{k\geq 1}$ is  an increasing sequence in $\N$.
We  denote
$\Omega_k:=d_k\ba+L_\ba$ and call it
the $k$-th \emph{section} of $\Omega+L_\ba$.
We claim that:

\smallskip

\textit{Claim 1.   For all $k\geq 0$,  $G_1\cap \Omega_k=\emptyset$ or   $\Omega_k$.}

\smallskip

We prove the claim by induction. Clearly the claim holds for $k=1$.
Let $k\geq 2$ and assume  that    $G_1\cap \Omega_j=\emptyset$ or   $\Omega_j$ holds for $j<k$. Then
 \begin{equation}\label{eq:G1}
G_1\cap (\Omega_1\cup \cdots \cup \Omega_{k-1})=V\ba+L_\ba,
\end{equation}
 where $V\subset\{d_1, d_2,\dots, d_{k-1}\}$. Hence,  for $j<k$,
\begin{equation}\label{eq:G-omega}
(F_0\oplus G_1)\cap \Omega_j=(F_0\oplus (\{g_0,\dots, g_h\}\ba+L_\ba))\cap \Omega_j=\emptyset \text{ or }\Omega_j.
\end{equation}
Consequently, for $j<k$, as the complement of the right hand side of \eqref{eq:G-omega},
\begin{equation}\label{eq:F-omega}
(F_1\oplus G_0)\cap \Omega_j=\emptyset \text{ or }\Omega_j.
\end{equation}
Let $f^*$ be the smallest element in $F_1$ such that $f^*\not\in \N\ba$. By \eqref{eq:G-omega} and \eqref{eq:F-omega},
\begin{equation}\label{eq:f-star}
f^*\not\in \Omega_1\cup\cdots\cup \Omega_{k-1} \ (\text{if $f^*$ exists}).
\end{equation}

Suppose that Claim 1 is false for $k$.   Denote $C:=G_1\cap \Omega_k$.
We assert that
\begin{equation}\label{eq:C}
(F_0\oplus G_1)\cap \Omega_k=C.
\end{equation}
Notice that
$$(F_0\oplus G_1)\cap \Omega_k=[(F_0\oplus (\{g_0,\dots, g_h\}\ba+L_\ba))\cap \Omega_k]
\cup [(F_0\oplus C)\cap \Omega_k].$$
The second term on the right hand side is $C$,  so the first term must be the empty set, which implies \eqref{eq:C}.

By \eqref{eq:C} and \eqref{eq:F-omega}, we have that  $(F_1\oplus G_0)\cap \Omega_k\neq \emptyset$, so there exists
$f\in F_1$ such that $(f+  G_0)\cap \Omega_k\neq \emptyset$. The intersection is not $\Omega_k$ implies that
$f\not\in \N\ba$, so we have  $f\in \Omega_k$  since $f\geq f^*$.  Since $F_1\cap \Omega_k$ contains at most
one element, we conclude that $f=f^*$.

 Let $i$ be the index in $\{1,\dots, n\}$   such that
$f-\be_i\in \Omega_k$. Then $f-\be_i$ is not covered by $f+G_0$, and hence
it is not covered by $F_1+G_0$. It follows that $f-\be_i\in C$. Therefore, on one hand,  we have
$$
\ba+(f-\be_i)\in F_0+C\subset F_0+G_1;
$$
on the other hand,
$
f+(\ba-\be_i)\in F_1+L_\ba\subset F_1+G_0.
$
This is a contradiction, and Claim 1 is proved. Therefore, \eqref{eq:G1} and \eqref{eq:f-star} holds for all $k$, which imply that
$G_1=B_1\ba\oplus L_\ba$, $F_1=A_1\ba$ for some $A_1, B_1\subset \N$. The lemma is proved.
\end{proof}

\section{\textbf{Marginal pairs of $\N^*$-pairs}}\label{sec:marginal}
Let $j_1,\dots, j_m(1\leq m<n)$ be a subsequence of $1,\dots, n$.
We call
$Q=\N\be_{j_1}\oplus \cdots \oplus \N\be_{j_m}$
  an $m$-\emph{face} of $\N^n$. Define $\rho:Q\to \N^m$ as
  $
  \rho\left (\sum_{\ell=1}^m c_\ell \be_{j_\ell}\right )=(c_1,\dots, c_m).
  $
  We denote
  \begin{equation}
  T_Q=T\cap Q, \quad \SJ_Q=\SJ\cap Q,
  \end{equation}
  and  call $(\rho(T_Q), \rho(\SJ_Q))$ the \emph{marginal pair induced by $Q$}.

\begin{lemma}\label{lem:marginal}
Let $(T, \SJ)$ be an $\N^n$-pair and $Q$ be an $m$-face of $\N^n$. Then $(T_Q, \SJ_Q)$ is a $Q$-pair.
Consequently, the marginal pair $(\rho(T_Q), \rho(\SJ_Q))$ is an $\N^m$-pair.
\end{lemma}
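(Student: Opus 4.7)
The plan is to verify the two defining properties of a $Q$-pair for $(T_Q, \SJ_Q)$ directly from the $\N^n$-pair property of $(T,\SJ)$, and then transport the conclusion to $\N^m$ via the obvious isomorphism $\rho$.

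First, I would note that $T_Q + \SJ_Q \subset Q$ is automatic since $T_Q, \SJ_Q \subset Q$ and $Q$ is a sub-semigroup of $\N^n$ (closed under addition). The direct sum property is also inherited for free: if $a + b = a' + b'$ with $a, a' \in T_Q \subset T$ and $b, b' \in \SJ_Q \subset \SJ$, then the uniqueness of the $(T,\SJ)$-decomposition forces $a = a'$ and $b = b'$. So only the surjectivity $Q \subset T_Q \oplus \SJ_Q$ requires work.

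The key (and only substantive) step is the following nonnegativity observation. Suppose $Q = \N\be_{j_1} \oplus \cdots \oplus \N\be_{j_m}$ and let $c = (c_1,\dots,c_n) \in Q$; by definition $c_j = 0$ for every $j \notin \{j_1,\dots,j_m\}$. Since $(T,\SJ)$ is an $\N^n$-pair, we may write $c = a + b$ with $a \in T$, $b \in \SJ$, and crucially $a, b \in \N^n$. For any coordinate $j$ with $c_j = 0$, the identity $a_j + b_j = 0$ together with $a_j, b_j \geq 0$ forces $a_j = b_j = 0$. Hence both $a$ and $b$ lie in $Q$, giving $a \in T_Q$ and $b \in \SJ_Q$. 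This proves $Q \subset T_Q + \SJ_Q$, and together with the earlier observations we obtain $T_Q \oplus \SJ_Q = Q$, i.e.\ $(T_Q, \SJ_Q)$ is a $Q$-pair.

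For the consequence, the map $\rho : Q \to \N^m$ is manifestly a bijective semigroup isomorphism (it is literally the change of coordinates that strips the identically-zero slots). Applying $\rho$ to the identity $T_Q \oplus \SJ_Q = Q$ and using the fact that semigroup isomorphisms preserve direct sum decompositions yields $\rho(T_Q) \oplus \rho(\SJ_Q) = \rho(Q) = \N^m$, so $(\rho(T_Q), \rho(\SJ_Q))$ is an $\N^m$-pair. I do not anticipate any real obstacle here; the whole lemma hinges on the single fact that a sum of nonnegative integers vanishes only when each summand vanishes, and this lemma is really the setup for the substantive arguments in the later sections rather than a difficulty in itself.
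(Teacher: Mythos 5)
Your proof is correct. The paper proves the same fact with the power-series formalism it has already set up: in $T(x)\SJ(x)=\N^n(x)$ one substitutes $x_k=0$ for all $k\notin\{j_1,\dots,j_m\}$, which kills precisely the terms $x^{\ba}$ with some $a_k>0$ outside the face; what survives is $T_Q(x)\SJ_Q(x)=Q(x)$, and this single identity encodes both surjectivity and unique decomposition. Your version unpacks this into an elementary set-theoretic argument: the substitution $x_k=0$ is replaced by the observation that a vanishing sum of nonnegative integers has vanishing summands, which is exactly what forces $a,b\in Q$ in your surjectivity step, and the fact that $Q(x)$ has all coefficients equal to $1$ is replaced by inheriting uniqueness from $(T,\SJ)$. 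The content is the same; the paper's phrasing is shorter because it reuses the generating-function machinery, while yours is self-contained and makes the nonnegativity hypothesis visibly do the work. One small remark: you could have stated explicitly that $T_Q,\SJ_Q\subset Q$ by definition (needed for the $Q$-pair condition $A,B\subset C$), but that is immediate.
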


\begin{proof}  In $T(x)\SJ(x)=\N^n(x)$,  setting  $x_k=0$ if $k\not\in \{j_1,\dots, j_m\}$,  we obtain
  $T_Q(x)\SJ_Q(x)=Q(x)$, which implies that  $T_Q\oplus \SJ_Q=Q$. The second assertion is obvious.
\end{proof}

If $(T,\SJ)$ is an $\N$-pair with  $\{0,1,\dots, p-1\}\subset \SJ$ and $p\in T$, we call $p$ the \emph{germ}
of $T$.

\begin{thm}\label{thm:marginal}
Let $(T, \SJ)$ be an $\N^n$-pair and  $Q=\N\be_1\oplus\cdots\oplus\N\be_m$ with $1\leq m\leq n$.
If the marginal pair $(\rho(T_Q), \rho(\SJ_Q)$ is of pure type in case of $m>1$, then there exists an $\N^{n-m+1}$-pair $(\widetilde T, \widetilde \SJ)$ such that
  \begin{equation}\label{eq:extension-1}
  \begin{array}{ll}
  T(x)= \widetilde T(x_1^{a_1}\cdots x_m^{a_m}, x_{m+1}, \dots, x_n),\\
 \SJ(x)=L_\ba(x_1,\dots, x_m)\widetilde \SJ(x_1^{a_1}\cdots x_m^{a_m}, x_{m+1}, \dots, x_n),
 \end{array}
 \end{equation}
where $\ba=(a_1,\dots, a_m)$ is the germ of $\rho(T_Q)$.
  \end{thm}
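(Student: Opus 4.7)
The plan is to expand $T$ and $\SJ$ as power series in the last $n-m$ variables, reducing the theorem to a sequence of ``slice'' identities in $\N^m$, each of which is resolved by the key Lemma \ref{lem:high-dim}. Write $y=(x_1,\dots,x_m)$ and $z=(x_{m+1},\dots,x_n)$, and expand
\[
T(y,z)=\sum_{\bd\in\N^{n-m}} T_\bd(y)\,z^\bd,\qquad \SJ(y,z)=\sum_{\bd\in\N^{n-m}} \SJ_\bd(y)\,z^\bd,
\]
with $T_\bd,\SJ_\bd\subset\N^m$. Then $T\SJ=\N^m(y)\N^{n-m}(z)$ becomes the family of convolution identities $\sum_{\bb+\bc=\bd} T_\bb(y)\SJ_\bc(y)=\N^m(y)$ indexed by $\bd\in\N^{n-m}$. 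The case $\bd=\bzero$ recovers the marginal pair, so after possibly swapping $T$ and $\SJ$ to ensure that $\ba$ is the germ of $\rho(T_Q)$, Theorem \ref{thm:pure type} (for $m\geq 2$) or Lemma \ref{lem:sands} (for $m=1$) supplies an $\N$-pair $(A_0,B_0)$ satisfying $T_\bzero=A_0\ba$, $\SJ_\bzero=B_0\ba\oplus L_\ba$, with $0,1\in A_0$ and $0\in B_0$.

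The main step is to prove by induction on $|\bd|=d_1+\cdots+d_{n-m}$ that $T_\bd=A_\bd\ba$ and $\SJ_\bd=B_\bd\ba\oplus L_\ba$ for some $A_\bd,B_\bd\subset\N$. Isolating the extremal contributions in the convolution identity at level $\bd$ gives
\[
T_\bzero(y)\SJ_\bd(y)+T_\bd(y)\SJ_\bzero(y)=\N^m(y)-\!\!\!\sum_{\substack{\bb+\bc=\bd\\ \bb,\bc\neq\bzero}}\!\!\! T_\bb(y)\SJ_\bc(y).
\]
Each middle pair satisfies $|\bb|,|\bc|<|\bd|$, so by induction the term factors as $L_\ba(y)\,A_\bb(y^\ba)B_\bc(y^\ba)$. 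Combined with the identity $\N^m(y)=L_\ba(y)\N(y^\ba)$ from \eqref{eq:Lba}, the right-hand side takes the form $L_\ba(y)\,\Omega(y^\ba)$ for some one-variable formal series $\Omega$. Since every coefficient of $T\SJ=\N^n$ equals $1$, the left-hand side is coefficientwise in $\{0,1\}$; combined with the direct-sum decomposition $L_\ba\oplus\N\ba=\N^m$, this forces $\Omega$ itself to be a $\{0,1\}$-series, hence $\Omega\subset\N$ and $\Omega\ba\subset\N\ba$. Lemma \ref{lem:high-dim} applied with $F_0=T_\bzero$, $G_0=\SJ_\bzero$, $F_1=T_\bd$, $G_1=\SJ_\bd$ now yields the desired decompositions.

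Define $\widetilde T=\{(k,\bd):k\in A_\bd,\ \bd\in\N^{n-m}\}$ and $\widetilde\SJ=\{(k,\bd):k\in B_\bd,\ \bd\in\N^{n-m}\}$; then \eqref{eq:extension-1} follows by direct substitution in the expansions of $T$ and $\SJ$. Dividing the factored identity $T(y,z)\SJ(y,z)=L_\ba(y)\N(y^\ba)\N^{n-m}(z)$ by $L_\ba(y)$ and treating $y^\ba$ as a fresh variable $w$ yields $\widetilde T(w,z)\widetilde\SJ(w,z)=\N^{n-m+1}(w,z)$, so $(\widetilde T,\widetilde\SJ)$ is an $\N^{n-m+1}$-pair.

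The principal obstacle lies in the inductive step: one must show that the right-hand side truly factors as $L_\ba(y)\Omega(y^\ba)$ with a $\{0,1\}$-valued $\Omega\subset\N$. This requires both a positivity argument---every partial convolution at level $\bd$ is dominated coefficientwise by $\N^m(y)$, so no negative coefficients arise after subtraction---and the observation that quotienting a $\{0,1\}$-series by $L_\ba(y)$ (using $L_\ba\oplus\N\ba=\N^m$) preserves the binary property on the $\N\ba$-support, producing precisely the hypothesis required by Lemma \ref{lem:high-dim}.
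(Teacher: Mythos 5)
Your proposal is correct and follows essentially the same route as the paper's own proof: slice along the last $n-m$ variables to get the convolution identity $\sum_{\bp+\bq=\bm}T_\bp\SJ_\bq=\N^m$, handle the base slice via Theorem \ref{thm:pure type} (or Lemma \ref{lem:sands} for $m=1$), and close the inductive step with Lemma \ref{lem:high-dim}. The only cosmetic difference is that you induct on $|\bd|$ while the paper inducts on the componentwise partial order; both are valid, and your explicit positivity argument that each term $T_\bp\SJ_\bq$ is binary with disjoint supports (hence $\Omega\subset\N$) usefully fills in a step the paper leaves implicit.
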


We note that  \eqref{eq:extension-1} is a second type extension   if $m>1$, and is of first
type if $m=1$. Moreover, if $m=n$, then Theorem \ref{thm:marginal} becomes Theorem \ref{thm:pure type} or Lemma \ref{lem:sands}.

 \begin{proof} By Theorem \ref{thm:pure type} or Lemma \ref{lem:sands}, there exists an $\N$-pair $(A,B)$ with $1\in A$ such that \begin{equation}\label{eq:rho}
\rho(T_Q)=A\ba,\quad \rho(\SJ_Q)=B \ba  \oplus L_\ba,
\end{equation}
(Remember that in case of $m=1$,  $L_\ba=\{0,1,\dots, \ba-1\}$.)  Let $\bm\in \N^{n-m}$. Define
$$T_\bm=\{u\in \N^m;~(u,\bm)\in T\}, \quad \SJ_\bm=\{v\in \N^m;~(v,\bm)\in \SJ\}.$$
 Denote $\bar {\bar x}=(x_{m+1},\dots, x_n)$, then
\begin{equation}\label{eq:deco-new}
T(x)=\sum_{\bp\in \N^{n-m}} T_\bp(x_1,\dots, x_m)\bar {\bar x}^\bp,
\quad \SJ(x)=\sum_{\bq\in \N^{n-m}}\SJ_\bq(x_1,\dots, x_m) \bar {\bar x}^\bq .
\end{equation}
We claim that for all $\bm\in \N^{n-m}$, it holds that
\begin{equation}\label{needle-1-new}
  \N^m(x_1,\dots,x_m)  =  \sum_{\bp+\bq=\bm} T_\bp(x_1,\dots, x_m) \SJ_{\bq}(x_1,\dots, x_m).
\end{equation}
Clearly
$\N^n(x)=\N^m(x_1,\dots, x_m)\sum_{\bd\in \N^{n-m}} \bar{\bar x}^\bd.$
On the other hand, by \eqref{eq:deco-new},
$$
\N^n(x)=T(x)\SJ(x)
= \sum_{\bd\in \N^{n-m}} \sum_{\bp+\bq=\bd} T_\bp(x_1,\dots, x_m) \SJ_{\bq}(x_1,\dots, x_m)\bar{\bar x}^\bd.
$$
Comparing the terms in the above two equations with the factor $\bar{\bar x}^{\bm}$, we obtain \eqref{needle-1-new}.

Denote $\bar x=(x_1,\dots, x_m)$. We shall prove by induction that
\begin{equation}\label{needle-2-new}
T_\bm(\bar x)=f_\bm(\bar x^\ba), \quad \SJ_\bm(\bar x)=L_\ba(\bar x)G_\bm(\bar x^\ba),
\end{equation}
for some $f_\bm, G_\bm\subset \N$.

For $\bm=\bzero$,  $(T_\bzero, \SJ_\bzero)=(\rho(T_Q), \rho(\SJ_Q))$.
In this case \eqref{needle-2-new} holds by  \eqref{eq:rho}.

Suppose \eqref{needle-2-new}  holds if we replace $\bm$ by any point $\bm'\leq  \bm$ and $\bm'\neq \bm$.
We are going to show that \eqref{needle-2-new} holds for $\bm$.

 If $\bp+\bq=\bm$ and $\bp\neq \bzero$, $\bq\neq \bzero$, then \eqref{needle-2-new} holds for  $\bp$ and $\bq$, so
 \begin{equation}\label{eq:Hpq}
 T_\bp(\bar x)\SJ_{\bq}(\bar x)=L_\ba(\bar x)f_{\bp}(\bar x^\ba)G_{\bq}(\bar x^{\ba}).
 \end{equation}
 By  \eqref{needle-1-new}, we have
 \begin{equation}\label{eq:pq}
  \sum_{\bp+\bq=\bm} T_\bp(\bar x) \SJ_{\bq}(\bar x)=L_\ba(\bar x)\N(\bar x^{\ba}),
 \end{equation}
 Subtracting  \eqref{eq:Hpq} from \eqref{eq:pq}, we obtain that   there exists   $\Omega\subset \N$ such that
 $$
 T_{\bzero}(\bar x)\SJ_\bm(\bar x)+T_\bm(\bar x)\SJ_\bzero(\bar x)=L_\ba(\bar x)\Omega(\bar x^\ba).
 $$
 Hence, by Lemma \ref{lem:high-dim}, we obtain \eqref{needle-2-new}.

 Finally, substituting \eqref{needle-2-new} into \eqref{eq:deco-new}, we obtain that  (\ref{eq:extension-1})
 holds for some $\widetilde T, \widetilde \SJ\in \N^{n-m+1}$.
 It is an easy matter to show that $(\widetilde T, \widetilde \SJ)$ is an $\N^{n-m+1}$-pair. The  theorem is proved.
  \end{proof}

\section{\textbf{$1$-face reduction of  $\N^*$-pair}}\label{sec:1-face}

Let $(T, \SJ)$ be a primitive $\N^n$-pair. We say $(T,\SJ)$ is of \emph{class} ${\cal F}_0$,
if
$$
 T\cap \N\be_j=\{\bzero\} \text{ or }\N\be_j, \quad  \text{for all }j\in \{1,\dots, n\}.
$$
In this section we show that any primitive $\N^n$-pair is a finite extension
of a pair of class ${\cal F}_0$.

Denote  $\bar{x}=(x_2,\dots, x_n)$.
Set $Q=\N\be_1$ and $\ba=p$ in Theorem \ref{thm:marginal}, we obtain

\begin{lemma}\label{thm:sands} Let $(T,\SJ)$ be an $\N^n$-pair.  If $p\be_1\in T$ and $\{0,1,\dots, p-1\}\be_1\subset \SJ$, then
there exists an $\N^n$-pair  $(\widetilde T, \widetilde \SJ)$ such that
$$
T(x)=\widetilde T(x_1^p, \bar{x}), \quad
\SJ(x)=(1+x_1+\cdots+x_1^{p-1})\widetilde \SJ(x_1^p, \bar{x}).
$$
\end{lemma}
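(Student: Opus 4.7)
The plan is to derive this lemma as a direct specialization of Theorem \ref{thm:marginal}, taking $m=1$ and $Q=\N\be_1$. First I will verify the hypotheses, then read off the conclusion.

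First I would identify the marginal pair. With $Q=\N\be_1$, Lemma \ref{lem:marginal} tells us that $(\rho(T_Q),\rho(\SJ_Q))$ is an $\N$-pair. The hypothesis $p\be_1\in T$ gives $p\in\rho(T_Q)$, and $\{0,1,\dots,p-1\}\be_1\subset\SJ$ gives $\{0,1,\dots,p-1\}\subset\rho(\SJ_Q)$. Next I would argue that $p$ is the germ of $\rho(T_Q)$, i.e.\ the smallest element of $\rho(T_Q)\setminus\{0\}$. Indeed, if $k\be_1\in T$ for some $0<k<p$, then $k\be_1\in T\cap\SJ$ with $k\be_1\neq\bzero$, producing two distinct $(T,\SJ)$-decompositions $\bzero+k\be_1=k\be_1+\bzero$ of the same point, contradicting the direct sum property. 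Hence no positive integer less than $p$ lies in $\rho(T_Q)$, so the germ is exactly $\ba=p$.

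Next I would note that the pure type hypothesis of Theorem \ref{thm:marginal} is required only in the case $m>1$; since here $m=1$, it is vacuous and the theorem applies directly. Specializing the conclusion \eqref{eq:extension-1} of Theorem \ref{thm:marginal} with $m=1$, $Q=\N\be_1$, and germ $\ba=p$, and using $L_{\ba}(x_1)=1+x_1+\cdots+x_1^{p-1}$, yields an $\N^n$-pair $(\widetilde T,\widetilde\SJ)$ with
\[
T(x)=\widetilde T(x_1^p,\bar x),\qquad \SJ(x)=(1+x_1+\cdots+x_1^{p-1})\widetilde\SJ(x_1^p,\bar x),
\]
which is exactly the claim.

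There is essentially no obstacle: all the substantive content is packaged inside Theorem \ref{thm:marginal}, and the only thing to check independently is that the germ of the marginal pair equals $p$, which follows from a one-line uniqueness argument. The lemma is therefore simply the one-dimensional-face version of Theorem \ref{thm:marginal}, and in the language of Section \ref{sec:extension} it realizes $(T,\SJ)$ as a first type extension of $(\widetilde T,\widetilde\SJ)$ via the interval pair $(\{0\},\{0,1,\dots,p-1\})$ of size $p$.
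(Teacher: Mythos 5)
Your proof is correct and follows exactly the route the paper intends: Lemma \ref{thm:sands} is stated in the paper with the one-line justification "Set $Q=\N\be_1$ and $\ba=p$ in Theorem \ref{thm:marginal}," and you have simply spelled out that specialization, including the easy check that the germ of $\rho(T_Q)$ is $p$ (which, given the paper's definition of germ for an $\N$-pair, is read off directly from the hypotheses, though your one-line uniqueness argument is a sound verification). No gaps.
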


Using the above lemma repeatedly, we   get   `bigger' factors of $T(x)$ and $\SJ(x)$.

\begin{pro}\label{lem:CD} Let $(T,\SJ)$ be an $\N^n$-pair.
Denote $T\cap \N\be_1=A\be_1,$ $\SJ\cap \N\be_1=B\be_1$. Let $(C,D)$ be a $\{0,1,\dots, N-1\}$-pair such that
$$A=C\oplus NA_1, \quad  B=D\oplus NB_1$$
for an $\N$-pair $(A_1, B_1)$. Then there exists an $\N^n$-pair $(\widetilde T, \widetilde \SJ)$ such that
\begin{equation}\label{eq:1-dim-reduce}
T(x)=C(x_1)\widetilde T(x_1^N, \bar x), \quad \SJ(x)=D(x_1)\widetilde \SJ(x_1^N, \bar x).
\end{equation}
\end{pro}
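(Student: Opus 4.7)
The plan is strong induction on the interval-pair size $N$. The base case $N = 1$ is immediate, since then $C = D = \{0\}$ and one may take $(\widetilde T, \widetilde \SJ) := (T, \SJ)$.

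For $N \geq 2$, I would peel off one level of $(C, D)$ by invoking Lemma \ref{rem:N-pair}: after possibly interchanging $T$ with $\SJ$ (and $C$ with $D$), this structural lemma provides an integer $p \geq 2$ dividing $N$ and a $\{0, 1, \dots, N/p - 1\}$-pair $(C', D')$ with $C = pC'$ and $D = \{0, \dots, p-1\} \oplus pD'$. Substituting these decompositions into the given factorizations of $A$ and $B$ yields
\[
A = p\bigl(C' \oplus (N/p) A_1\bigr) =: pA^*, \qquad B = \{0, \dots, p-1\} \oplus p\bigl(D' \oplus (N/p) B_1\bigr) =: \{0, \dots, p-1\} \oplus pB^*,
\]
where $(A^*, B^*)$ itself forms an $\N$-pair. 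The inclusion $\{0, \dots, p-1\}\be_1 \subset \SJ$ is automatic from $\{0, \dots, p-1\} \subset D$, and choosing $p$ to be the smallest positive element of $C$ (which, by an iteration of Lemma \ref{rem:N-pair}, divides $N$ and produces the block decomposition above with $1 \in C'$) guarantees $1 \in A^*$, hence $p\be_1 \in T$. Both hypotheses of Lemma \ref{thm:sands} are then satisfied, and it returns an $\N^n$-pair $(T^*, \SJ^*)$ whose $x_1$-marginals are $A^*$ and $B^*$, such that
\[
T(x) = T^*(x_1^p, \bar x), \qquad \SJ(x) = (1 + x_1 + \cdots + x_1^{p-1})\,\SJ^*(x_1^p, \bar x).
\]

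The triple $(T^*, \SJ^*, (C', D'))$ now meets the hypothesis of the proposition with the strictly smaller interval-pair size $N/p$, so the inductive hypothesis supplies an $\N^n$-pair $(\widetilde T, \widetilde \SJ)$ with $T^*(x) = C'(x_1)\widetilde T(x_1^{N/p}, \bar x)$ and $\SJ^*(x) = D'(x_1)\widetilde \SJ(x_1^{N/p}, \bar x)$. Substituting $x_1 \mapsto x_1^p$ into these two identities and using the relations $C(x_1) = C'(x_1^p)$ and $D(x_1) = (1 + x_1 + \cdots + x_1^{p-1})\,D'(x_1^p)$, which are immediate from the block decompositions of $C$ and $D$, one recovers exactly \eqref{eq:1-dim-reduce}.

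The delicate step is securing the germ hypothesis $p\be_1 \in T$ of Lemma \ref{thm:sands}. Unlike the companion inclusion $\{0, \dots, p-1\}\be_1 \subset \SJ$, which is forced by the block structure of $D$, the inclusion $p\be_1 \in T$ is sensitive to the choice of $p$. Picking $p$ as the smallest nonzero element of $C$ (or of $D$, after swapping $T \leftrightarrow \SJ$) is exactly the alignment with the germ of $(C, D)$ that makes Lemma \ref{rem:N-pair} output a block factorization with $1 \in C'$, and this is what propels the induction.
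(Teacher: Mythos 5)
Your overall strategy---strong induction on $N$, peeling off a factor via Lemma \ref{rem:N-pair} and applying Lemma \ref{thm:sands}, then invoking the inductive hypothesis on the size-$N/p$ pair $(C',D')$---is the same as the paper's proof for the main case. But there is a genuine gap: you do not handle the case $C=\{0\}$ (equivalently, after your swap, $D=\{0\}$), which certainly occurs for $N\geq 2$, namely when $D=\{0,1,\dots,N-1\}$.

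In that case the quantity you hinge everything on, ``the smallest positive element of $C$'', does not exist, and swapping $T\leftrightarrow\SJ$ does not rescue it: the swap makes the new $C$ equal to $\{0,\dots,N-1\}$, so its smallest positive element is $1$, which is excluded since Lemma \ref{rem:N-pair} requires $p\geq 2$. More to the point, the germ hypothesis $p\be_1\in T$ genuinely fails for any divisor $p$ of $N$ with $p<N$ when $C=\{0\}$ (then $A=NA_1$ contains no element strictly between $0$ and $N$), and taking $p=N$ requires $1\in A_1$, which the hypotheses of the proposition do not guarantee ($1$ could lie in $B_1$ instead). So your inductive step cannot even get started here.

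The paper treats $C=\{0\}$ as a separate case that terminates without further induction. Keeping the normalization $1\in D$, it sets $h:=\min(A\setminus\{0\})$, which equals the least integer not in $B$, and observes that $N\mid h$ because $A=NA_1$. Then $h\be_1\in T$ and $\{0,\dots,h-1\}\be_1\subset\SJ$, so Lemma \ref{thm:sands} applies with exponent $h$, yielding $T(x)=T^*(x_1^h,\bar x)$ and $\SJ(x)=(1+x_1+\cdots+x_1^{h-1})\SJ^*(x_1^h,\bar x)$; one then factors $1+\cdots+x_1^{h-1}=(1+\cdots+x_1^{N-1})(1+x_1^N+\cdots+x_1^{h-N})$ and reads off $\widetilde T(x)=T^*(x_1^{h/N},\bar x)$ and $\widetilde\SJ(x)=\sum_{j=0}^{h/N-1}x_1^j\SJ^*(x_1^{h/N},\bar x)$ directly. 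You should add this branch; with it your argument matches the paper's.
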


\begin{proof}
We prove the result by induction on $N$. If $N=1$, the proposition holds trivially.
Assume $N\geq 2$ and that the proposition holds for any pair and any positive integer less than $N$.

Assume $1\in D$ without loss of generality.

 If $C=\{0\}$, let $h$ be the smallest   integer such that $h\not\in B$.
 Then by Lemma \ref{thm:sands},
 $$T(x)=T^*(x_1^h, \bar x), \quad \SJ(x)=(1+x_1+\dots+x_1^{h-1})\SJ^*(x_1^h,\bar x).$$
 Since   $C(x_1)=1$, $D(x_1)=1+x_1+\cdots+x_1^{N-1}$ and $N|h$, set
 $$\widetilde T(x)=T^*(x_1^{h/N}, \bar x),\quad  \widetilde \SJ(x)=\sum_{j=0}^{h/N-1} x_1^{j} \SJ^*(x_1^{h/N},\bar x),$$
 we obtain \eqref{eq:1-dim-reduce}.

 Now suppose $\#C\geq 2$. Let $p$ be the smallest element of $C$ other then $0$. Then $p\geq 2$.
On one hand,  by Lemma \ref{rem:N-pair},  there exists a $\{0,1,\dots, {N}/{p}-1\}$-pair  $(\widetilde C, \widetilde D)$  such that
\begin{equation}\label{reduction_1}
C(x_1)=\widetilde C(x_1^p), \quad D(x_1)=(1+x_1+\dots+x_1^{p-1})\widetilde D(x_1^p).
\end{equation}
On the other hand, by  Lemma \ref{thm:sands},
 there exists an $\N^n$-pair $(T^*, \SJ^*)$ such that
\begin{equation}\label{reduction_2}
T(x)= T^*(x_1^p,\bar x), \quad \SJ(x)=(1+x_1+\cdots+x_1^{p-1}) \SJ^*(x_1^p,\bar x).
\end{equation}


Denote $T^*\cap \N\be_1=A^*\be_1,$ $\SJ^*\cap \N\be_1=B^*\be_1$.
By \eqref{reduction_2}, we have
 $A=pA^*$ and $B=\{0,1,\dots, p-1\}+pB^*$, which together with \eqref{reduction_1} imply that
$$A^*=\widetilde C\oplus (N/p)A_1, \quad B^*=\widetilde D\oplus (N/p)B_1.$$
So by induction hypothesis, there exists an $\N^n$-pair $(\widetilde T, \widetilde \SJ)$ such that
$$
T^*(x)=\widetilde C(x_1) \widetilde T(x_1^{N/p},\bar x), \quad
\SJ^*(x)=\widetilde D(x_1) \widetilde \SJ(x_1^{N/p},\bar x).
$$
 This together with \eqref{reduction_1} and \eqref{reduction_2} imply \eqref{eq:1-dim-reduce}.
The lemma is proved.
\end{proof}

The following theorem
is proved by Niven (\cite[Lemma 4]{Niven}) in the two dimensional case.

\begin{thm}\label{thm:finite}
If $(T, \SJ)$ be a primitive $\N^n$-pair with $n\geq 2$, then either
$T\cap \N\be_1$ or $\SJ\cap \N\be_1$ is   finite.
\end{thm}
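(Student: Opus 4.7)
My plan is to argue the contrapositive: assuming that both $A := T \cap \N\be_1$ and $B := \SJ \cap \N\be_1$ (identified with subsets of $\N$) are infinite, I will show that $T(x)$ is variable separable in the variable $x_1$, so by the lemma immediately preceding Theorem \ref{thm:separable} the pair $(T, \SJ)$ fails to be primitive.

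By Lemma \ref{lem:marginal}, $(A, B)$ is an $\N$-pair, and since both are infinite, Lemma \ref{cor:CkDk} supplies a sequence of interval pairs $\{(C_k, D_k)\}_{k \ge 1}$ of sizes $N_k$ with $C_k \uparrow A$, $D_k \uparrow B$, together with $A = C_k \oplus N_k A_k$ and $B = D_k \oplus N_k B_k$; in particular $N_k \to \infty$, and since $0 \in A$ forces $0 \in A_k$ we have $C_k \subset A$. Proposition \ref{lem:CD} then produces $\N^n$-pairs $(\widetilde T_k, \widetilde \SJ_k)$ satisfying
\begin{equation*}
T(x) = C_k(x_1)\,\widetilde T_k(x_1^{N_k}, \bar x),
\end{equation*}
where $\bar x = (x_2, \ldots, x_n)$.

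For each $\bar u \in \N^{n-1}$ write $T_{\bar u} = \{u_1 : (u_1, \bar u) \in T\}$ and let $T_{\bar u}(x_1)$ be its generating series. Comparing coefficients of $\bar x^{\bar u}$ in the factorisation above yields $T_{\bar u}(x_1) = C_k(x_1)\, P_{k,\bar u}(x_1^{N_k})$ for some binary series $P_{k, \bar u}$, equivalently
\begin{equation*}
T_{\bar u} = C_k \oplus N_k V_{k, \bar u},
\end{equation*}
where $V_{k, \bar u}$ is the support of $P_{k, \bar u}$. The crucial step is now elementary: for any fixed $u_1 \in T_{\bar u}$, pick $k$ so large that $N_k > u_1$; then the direct-sum decomposition $u_1 = c + N_k v$ with $c \in C_k \subset \{0, \ldots, N_k - 1\}$ forces $v = 0$, so $u_1 = c \in C_k \subset A$. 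Hence $T_{\bar u} \subset A$. The same choice of $k$ shows that $0 \in V_{k, \bar u}$, so $C_k \subset T_{\bar u}$, and letting $k \to \infty$ gives $A \subset T_{\bar u}$ whenever $T_{\bar u}$ is nonempty. Therefore each $T_{\bar u}$ is either empty or equal to $A$, which is exactly the statement that $T = A \times G$ with $G = \{\bar u \in \N^{n-1}: T_{\bar u} \neq \emptyset\}$, i.e., $T(x) = A(x_1)\,G(\bar x)$ is variable separable.

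The main obstacle is conceptual rather than technical: neither the sets $V_{k, \bar u}$ nor the fibres themselves stabilise in any uniform way as $k$ grows, and the factorisations at each fixed $k$ carry no immediately visible product structure across different $\bar u$. The trick that unlocks the argument is to apply the factorisation \emph{pointwise}---each element $u_1 \in T_{\bar u}$ is forced into $C_k$ by choosing $k$ tailored to that particular element, and this single element-dependent choice is strong enough to pin down the entire fibre. Once this is seen, the remainder is bookkeeping.
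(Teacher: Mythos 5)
Your argument is correct and rests on the same core machinery as the paper's proof: argue the contrapositive, invoke Lemma \ref{cor:CkDk} to produce an increasing sequence of interval pairs $(C_k,D_k)$ with $C_k\uparrow A$, $D_k\uparrow B$ and $N_k\to\infty$, and apply Proposition \ref{lem:CD} to factor $T(x)=C_k(x_1)\widetilde T_k(x_1^{N_k},\bar x)$. Where you diverge is in how this factorization is converted into variable separability. The paper sets $x_1=0$ in $T(x)=C(x_1)H(x)$ to see that $T_Q\subset H$ (with $Q=\N\be_2\oplus\cdots\oplus\N\be_n$), deduces $C_k\times\rho(T_Q)\subset T$, passes to the limit to get $A\times\rho(T_Q)\subset T$ and the symmetric inclusion $B\times\rho(\SJ_Q)\subset\SJ$, and then invokes uniqueness of complements (Lemma \ref{lem:unique}(i)) to promote both inclusions to equalities. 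You instead work fiberwise: from $T_{\bar u}=C_k\oplus N_k V_{k,\bar u}$, an element-dependent choice of $k$ with $N_k>u_1$ simultaneously pins $u_1$ inside $C_k$ (so $T_{\bar u}\subset A$) and forces $0\in V_{k,\bar u}$ (so $C_k\subset T_{\bar u}$ for all large $k$, hence $A\subset T_{\bar u}$), giving $T_{\bar u}=A$ for every nonempty fiber and hence $T=A\times G$ directly. Your route is slightly more elementary in that it establishes the containment $T_{\bar u}\subset A$ explicitly and dispenses with the closing appeal to Lemma \ref{lem:unique}; the paper's version is a bit shorter by treating $T$ and $\SJ$ symmetrically and outsourcing the final step to the uniqueness of the complementing set.
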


\begin{proof} Let $Q=\N\be_2\oplus \cdots \oplus \N\be_n$. Recall that
$T_Q=T\cap Q$, $\SJ_Q=\SJ\cap Q.$
Write
$T\cap \N\be_1=A\be_1$,  $\SJ\cap \N\be_1=B\be_1.$
Then $A\oplus B=\N$ and $T_Q\oplus \SJ_Q=Q$ (Lemma \ref{lem:marginal}).
Define
$\rho(a_1,\dots, a_n)=(a_2,\dots, a_n).$

Let $(C, D)$ be a $\{0,1,\dots, N-1\}$-pair satisfying the assumptions of Proposition \ref{lem:CD}.
Then there exists $H\subset \N^n$ such that
\begin{equation}\label{eq:CH}
T(x)=C(x_1) H(x).
\end{equation}
Setting $x_1=0$, we obtain  $T(0, x_2,\dots, x_n)=H(0,x_2,\dots, x_n)$,  which implies that $H\cap Q=T\cap Q=T_Q.$
So  $T_Q\subset H$, and   by \eqref{eq:CH}, we have
\begin{equation}\label{eq:CTQ}
C\times \rho(T_Q)\subset T.
\end{equation}

Suppose on the contrary that both $A$ and $B$ are infinite sets.
By Lemma \ref{cor:CkDk}, there exists a sequence of interval pairs $\{C_k, D_k)\}_{k=1}^\infty$   such that
$C_k\uparrow A$ and $D_k\uparrow B$ and $(C_k, D_k)$ satisfies the assumptions of Proposition \ref{lem:CD}.
This together with \eqref{eq:CTQ} imply that $A\times \rho(T_Q)\subset T$. Similarly, we have $B\times \rho(\SJ_Q) \subset \SJ$.

Since $(A\times \rho(T_Q), B\times \rho(\SJ_Q))$ is an $\N^n$-pair (Lemma \ref{lem:unique}),
  the above two inclusion relations imply that
$T=A\times \rho(T_Q)$ and $\SJ=B\times \rho(\SJ_Q),$
 which imply $(T,\SJ)$ is not primitive.
This contradiction proves the theorem.
\end{proof}

 \begin{thm}\label{thm:1-face} Let  $(T, \SJ)$ be a primitive $\N^n$-pair.
 Then there exists a primitive $\N^n$-pair $(\widetilde T, \widetilde \SJ)$ in ${\cal F}_0$
 such that $(T, \SJ)$ is a finite (first type) extension of $(\widetilde T, \widetilde \SJ)$.
 \end{thm}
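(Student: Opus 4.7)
My plan is to iteratively reduce one coordinate at a time to the ${\cal F}_0$ condition, using Proposition \ref{lem:CD} with the finiteness guaranteed by Theorem \ref{thm:finite}.

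First I would process coordinate $1$. Write $T\cap\N\be_1=A\be_1$ and $\SJ\cap\N\be_1=B\be_1$; by Lemma \ref{lem:marginal}, $(A,B)$ is an $\N$-pair. If $A\in\{\{0\},\N\}$, then the ${\cal F}_0$ condition at coordinate $1$ already holds and no reduction is needed. Otherwise, Theorem \ref{thm:finite} forces one of $A, B$ to be finite. Suppose $A$ is finite with $\#A\geq 2$; by Lemma \ref{cor:CkDk} there exist $N\geq 2$ and $D\subset\N$ with $A\oplus D=\{0,1,\dots,N-1\}$ and $B=D\oplus N\N$. Apply Proposition \ref{lem:CD} with $C=A$ and the trivial $\N$-pair $(A_1,B_1)=(\{0\},\N)$ to obtain an $\N^n$-pair $(\widetilde T,\widetilde\SJ)$ with
$$
T(x)=C(x_1)\widetilde T(x_1^N,\bar x),\qquad \SJ(x)=D(x_1)\widetilde\SJ(x_1^N,\bar x).
$$
Setting $\bar x=\bzero$ reads off $\widetilde T\cap\N\be_1=\{\bzero\}$, so ${\cal F}_0$ holds at coordinate $1$. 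The case where $B$ is finite is symmetric and yields $\widetilde T\cap\N\be_1=\N\be_1$. Because a first type extension is never illegal, Theorem \ref{thm:separable} ensures that $(\widetilde T,\widetilde\SJ)$ remains primitive.

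Next, I would iterate the same argument at coordinates $j=2,\dots,n$, invoking the coordinate-$j$ analogue of Proposition \ref{lem:CD}. The key compatibility observation is that a first type reduction at coordinate $j$ multiplies by a factor $C_j(x_j)$ with $C_j(0)=1$, since $0$ lies in every interval pair. Hence for any $i\neq j$, evaluating the extension identity at $x_k=0$ for all $k\neq i$ preserves the intersection with $\N\be_i$, so the ${\cal F}_0$ conditions established at earlier coordinates are not disturbed by later reductions.

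After $n$ steps one arrives at a primitive $\N^n$-pair in ${\cal F}_0$, and composing the (possibly trivial) first type extensions realizes $(T,\SJ)$ as a finite first type extension of it. The only delicate step is verifying the preservation of previously established ${\cal F}_0$ conditions across subsequent reductions; as outlined above, this reduces to the elementary fact that the constant term of every factor $C_j(x_j)$ equals $1$.
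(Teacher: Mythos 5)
Your proof is correct and follows essentially the same route as the paper: reduce coordinate by coordinate using Theorem~\ref{thm:finite} together with Lemma~\ref{cor:CkDk} and Proposition~\ref{lem:CD}, then observe (as the paper records in its claim (ii)) that the factor $C_j(x_j)$ has constant term $1$, so reductions at different coordinates do not disturb one another. You are slightly more explicit than the paper in invoking Theorem~\ref{thm:separable} to keep primitivity (which is needed to reapply Theorem~\ref{thm:finite} at the next coordinate), but this is the same argument.
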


 \begin{proof}
  We first reduce $(T, \SJ)$ along the variable $x_1$.
  Denote $ T\cap \N\be_1=A\be_1, \SJ\cap \N\be_1=B\be_1.$
 Then either $A$ or $B$ is finite (Theorem \ref{thm:finite}). Assume   $A$ is finite.
 By Lemma \ref{cor:CkDk}, there exist   $D\subset B$
 such that $A\oplus D=\{0,1,\dots,N-1\}$ and $B=D+N\N$.
  Hence, by Proposition \ref{lem:CD},
 there exists an  $\N^n$-pair $(T_1, \SJ_1)$ such that
 \begin{equation}\label{eq:CD}
 T(x)=A(x_1)T_1(x_1^N,\bar{x}),\quad   \SJ(x)=D(x_1)\SJ_1(x_1^N,\bar{x}).
 \end{equation}

We claim that: $(i)$ $T_1\cap \N\be_1=\{\bzero\}$;  $(ii)$ $T_1\cap \N\be_j=T\cap \N\be_j$ for $j\neq 1$.

From $T\cap \N\be_1=A\be_1$ we infer that $T(x_1,\bzero)=A(x_1)$,
 which together with \eqref{eq:CD} imply that $T_1(x_1^N, \bzero)=1$. So (i) holds.
Pick $j>1$,
by \eqref{eq:CD},  we have $T(\bzero,x_j,\bzero)=T_1(\bzero,x_j,\bzero),$ which means that $T\cap \N\be_j=T_1\cap \N\be_j$.
This proves (ii).

Next, we reduce $(T_1, \SJ_1)$
 along the variables $x_2,\dots, x_n$ one by one, and finally we obtain   an $\N^n$-pair $(\widetilde T, \widetilde \SJ)$ of class ${\cal F}_0$.
  The theorem is proved.
\end{proof}

\section{\textbf{Proof of Theorem \ref{thm:main}}}\label{sec:proof}

 The following lemma provides the last ingredient for proving  Theorem \ref{thm:main}.

  \begin{lemma}\label{lem:Q_0}
  Let $n\geq 2$ and  $(T, \SJ)$ be a primitive $\N^n$-pair of class ${\cal F}_0$.
  Then there exists an $m$-face $Q_0$ with $2\leq m\leq n$  such that the marginal pair induced by $Q_0$ is of pure type.
  \end{lemma}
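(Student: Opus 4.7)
The plan is to exploit the class $\SF_0$ hypothesis to partition the coordinate axes according to which of $T,\SJ$ contains them, and then locate an appropriate $2$-face $Q_0$ from this partition. Specifically, since the marginal pair $(T\cap \N\be_j,\SJ\cap\N\be_j)$ is an $\N$-pair on $\N\be_j$ (Lemma \ref{lem:marginal}), class $\SF_0$ forces, for each $j$, exactly one of $\N\be_j\subset T$ or $\N\be_j\subset \SJ$. This yields a disjoint partition $\{1,\dots,n\}=J_T\sqcup J_\SJ$ where $J_T=\{j:\N\be_j\subset T\}$ and $J_\SJ=\{j:\N\be_j\subset\SJ\}$.

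For the main case $n\geq 3$, I would simply use pigeonhole: since $|J_T|+|J_\SJ|=n\geq 3$, one of the two sets has at least two elements. Say $i,j\in J_T$ with $i\neq j$, and set $Q_0=\N\be_i\oplus\N\be_j$. Then $\N\be_i\cup \N\be_j\subset T\cap Q_0$, so $\rho(T_{Q_0})$ contains the union of the two coordinate axes of $\N^2$, which is exactly $L_0$ for this $2$-face. Hence the marginal pair on $Q_0$ is of pure type, and no use of primitivity is required in this case.

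The delicate case is $n=2$, where primitivity must actually be invoked. If $|J_T|=2$ or $|J_\SJ|=2$, taking $Q_0=\N^2$ works directly, so the issue is to exclude $|J_T|=|J_\SJ|=1$. Suppose WLOG $\N\be_1\subset T$ and $\N\be_2\subset\SJ$. For any $(a,b)$ with $a,b\geq 1$, the identity $(a,b)=(a,0)+(0,b)$ is already a legitimate $(T,\SJ)$-decomposition; if $(a,b)$ lay in $T$ or in $\SJ$, the trivial decomposition $(a,b)=(a,b)+\bzero$ would contradict uniqueness. This pinches $T=\N\be_1$, $\SJ=\N\be_2$, i.e.\ $(T,\SJ)$ is the Cartesian product of the $\N$-pairs $(\N,\{0\})$ and $(\{0\},\N)$, violating primitivity. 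I expect this $n=2$ uniqueness-of-decomposition argument to be the only nontrivial step; everything else reduces to pigeonhole on the axis partition produced by the class $\SF_0$ assumption.
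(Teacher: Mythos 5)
Your proof is correct, and it takes a genuinely different route from the paper's. Both start from the observation that class $\SF_0$ partitions the axes into $J_T\sqcup J_\SJ$ (in the paper's notation, $J_\SJ=\{1,\dots,M\}$ after rearrangement). From there the constructions diverge. The paper produces a single argument valid for all $n\geq2$: it considers $Q=\oplus_{j\in J_\SJ}\N\be_j$, uses primitivity to show $T_Q$ (or $\SJ_{Q'}$) is nontrivial, selects a point $\ba'\in T_Q\setminus\{\bzero\}$ of minimal support, and takes $Q_0$ to be the face spanned by that support; minimality then forces $L_0(Q_0)\subset\rho(\SJ_{Q_0})$. You instead pick $Q_0$ spanned by two axes from the \emph{same} block $J_T$ (or $J_\SJ$) of the partition, which by definition of $J_T$ immediately gives $L_0(Q_0)\subset\rho(T_{Q_0})$; pigeonhole supplies such a pair whenever $n\geq3$, with no appeal to primitivity, and primitivity enters only to rule out the residual $n=2$, $|J_T|=|J_\SJ|=1$ configuration via the direct uniqueness-of-decomposition computation. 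Note that the two constructions put $L_0$ on opposite sides of the marginal pair (paper: in the $\SJ$-marginal; you: in the $T$-marginal), so they really are different faces in general. What the paper's version buys is uniformity across $n$ and a $Q_0$ naturally aligned with the subsequent application of Theorem \ref{thm:marginal} (which wants the germ of $\rho(T_{Q_0})$, hence $L_0$ in the $\SJ$-marginal side); what yours buys is elementariness, an always-$2$-dimensional $Q_0$, and the extra observation that primitivity is only needed when $n=2$.
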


\begin{proof} Without loss of generality,  we  assume that $T\cap \N \be_j=\{\bzero\}$ holds for at least one $j$.
  By rearranging the order of variables, we may assume that
  $$
  T\cap \N\be_j=\{\bzero\}  \text{ for } j=1,\dots, M, \text{ and }   T\cap \N\be_j=\N\be_j \text{ otherwise},
  $$
  where $1\leq M\leq n$.  Denote $Q=\oplus_{j=1}^M \N\be_j$ and $Q'=\oplus_{j=M+1}^n \N\be_j$. (If $M=n$, we set $Q'=\{\bzero\}$.)
  Then $(T_Q, \SJ_Q)$ is a $Q$-pair and  $(T_{Q'}, \SJ_{Q'})$ is a $Q'$-pair.

 We claim that either $\# T_Q\geq 2$ or $\# \SJ_{Q'}\geq 2$.
  Suppose on the contrary that $\# T_Q=1$ and  $\#\SJ_{Q'}=1$. Then
  $T_Q=\SJ_{Q'}=\{\bzero\},$ which implies that  $\SJ_Q=Q\subset \SJ$ and $T_{Q'}=Q'\subset T.$
 It follows  that
 $(T, \SJ)=(Q', Q)$ and it is not primitive, which contradicts our assumption.

 Without loss of generality, let us    assume   that  $\# T_Q\geq 2$.
Let  $x^{\ba'}$ be a term  of $T_Q(x)-1$ such that
    the number of non-zero entries  of $\ba'$
 attains the minimum. Write $x^{\ba'}$ as
  $$x^{\ba'}=x_{j_1}^{r_1}\cdots x_{j_m}^{r_m}, \quad \text{ where } j_1,\dots, j_m\in \{1,\dots, M\}.$$
 Then $m\geq 2$ since $T\cap \N\be_j=\{\bzero\}$ for $1\leq j\leq M$.
Let $Q_0=\N\be_{j_1}\oplus\cdots\oplus \N\be_{j_m}$, which is the smallest face containing ${\ba'}$. Let $(f, g)$
   be the marginal pair induced by $Q_0$,
  then $(f, g)$  is an $\N^m$-pair of pure type by the minimality of $m$. The lemma is proved.
\end{proof}

\begin{proof}[\textbf{Proof of Theorem \ref{thm:main}.}] By Theorem \ref{thm:separable},
we only need show that any primitive $\N^*$-pair is a finite extension of an $\N$-pair.
Let $(T,\SJ)$ be a primitive $\N^n$-pair.
  We prove by induction on the dimension of  $(T,\SJ)$.
  If $n=1$, the theorem is trivially true.

  Assume  $n\geq 2$.
  By Theorem \ref{thm:1-face}, there exists a primitive $\N^n$-pair $(T_1, \SJ_1)$ of class ${\cal F}_0$ such that
  $(T, \SJ)$ is a finite extension of it.
  By Lemma \ref{lem:Q_0}, $(T_1, \SJ_1)$ has  an $m$-face $Q_0$ with $m\geq 2$ such that the marginal pair
     induced by $Q_0$ is of pure type.  So
  by Theorem \ref{thm:marginal}, there exists a primitive $\N^{n-m+1}$-pair $(\widetilde T, \widetilde \SJ)$
  such that $(T_1,\SJ_1)$ is a second type extension of $(\widetilde T, \widetilde \SJ)$.
 Finally, by  induction hypothesis, $(\widetilde T, \widetilde \SJ)$ is a finite extension of an $\N$-pair,
 say $(T_0, \SJ_0)$. Then $(T_0, \SJ_0)\to (\widetilde T, \widetilde \SJ) \to (T_1, \SJ_1)\to (T, \SJ)$
indicates the extension process from  $(T_0, \SJ_0)$ to  $(T, \SJ)$.
\end{proof}

\section{\textbf{Weighted tree of $\N^*$-pair}}\label{sec:tree}
In this section, we  study  weighted trees and the associated $\N^*$-pairs.

\subsection{Weighted tree}
A graph  $(V, \Gamma)$ with  node set $V$ and edge set $\Gamma$
  is a \emph{tree} if  it  is connected  and has no  cycle.
A \emph{rooted tree} is a triple $(V, \Gamma, \phi)$ where $(V,\Gamma)$ is a tree and $\phi\in V$;
we call $\phi$ the  \emph{root}
of the tree.

Let $(V, \Gamma)$ be a finite tree with root $\phi$.
The \emph{level} of $v$, denoted by $|v|$,  is   the length of the (unique) path joining $v$ and
$\phi$.
A node $v$ is called an \emph{offspring} of $u$, if there is an edge joining $u$ and $v$, and $|v|=|u|+1$; meanwhile we call $u$ the \emph{parent} of $v$.  For $u,v\in V$, we use $[u,v]$ to denote the edge joining $u$ and its offspring $v$. We call $u$ an \emph{ancestor} of $v$ if
 there is a path joining $u$ and $v$, and   $|u|<|v|$.
A node $v$ is called a \emph{top} if
it has no offspring. We denote by $V_0$ the set of tops.
We shall use the nodes as variables of power series.

\begin{defi}\label{def:tree}{\rm We call the quadruple
\begin{equation}\label{eq:w-tree}
\left ((T_0,\SJ_0), \bdelta , \balpha, \Phi\right )
\end{equation}
a weight of the tree $(V,\Gamma)$,
where $(T_0, \SJ_0)$ is an $\N$-pair, called the \emph{initial pair},
$$\bdelta: V\setminus V_0\to \{0,1\},\  \balpha: V\setminus \{\phi\} \to \N\setminus \{0\}, \ \Phi: V\setminus \{\phi\}\to \{\text{interval pairs}\}$$
are three maps, and in addition, $\bdelta_\phi=1$ if $T_0=\{0\}$ and
$\bdelta_\phi=0$ if $\SJ_0=\{0\}$.
}
\end{defi}

We call $\#V-\#V_0$ the \emph{norm} of the tree.
Two tops are said to be equivalent, if they share the same parents. An equivalent class of $V_0$ is called a \emph{branch} of $V_0$.

\subsection{Constructing $\N^*$-pairs from weighted trees} Now we define the pair generated by a weighted tree inductively on the norm of the tree.

Notice that $\Lambda_0=(\{\phi\}, \emptyset)$ is the only tree with norm $0$. The weight of $\Lambda_0$ only
consists of the initial pair, which we denote by $(T_0, \SJ_0)$.
We define the $\N^*$-pair associate with the weighted tree $\Lambda_0$ to be
$(T_0, \SJ_0)$.

 Let $q\geq 1$.
Suppose for any weighted tree  with norm less than $q$, we have associated an
  $\N^m$-pair  with it, where $m$ is the number of tops of the tree.

Let $(V,\Gamma)$ be a weighted tree with  norm $q$.
Let $V_0=\{x_1,\dots,x_n\}$.
Choose  a branch   $\{x_1,\dots, x_p\}$ of $V_0$ and denote their parents by $z_1$.
Let $(\widetilde V, \widetilde \Gamma)$ be  the subtree of $(V, \Gamma)$ obtained
by deleting the nodes $x_1,\dots, x_p$ and the edges $[z_1,x_1], \dots, [z_1, x_p]$.
Then
\begin{equation}\label{eq:VV0}
\widetilde V=V\setminus \{x_1,\cdots,x_p\}, \quad \widetilde V_0=\{z_1, x_{p+1},\dots, x_n\}.
\end{equation}

We  define the weight of $(\widetilde V, \widetilde \Gamma)$ to be the restriction of the
 weight $((T_0,\SJ_0), \bdelta, \balpha, \Phi)$ on it. (Now $z_1$ is a top of the subtree, so $\bdelta_{z_1}$
 is not needed in the restricted weight.)

 Since $\#\widetilde V-\# \widetilde V_0=q-1$, by the induction hypothesis, there is an $\N^{n-p+1}$-pair
 $(\widetilde T, \widetilde \SJ)$ associated with the weighted tree of $(\widetilde V, \widetilde \Gamma)$.

Denote $\delta=\bdelta_{z_1}$.
 For $j=1,\dots, p$, let $a_j=\balpha_{x_j}$,
 $(C_j,  D_j)=\Phi_{x_j}$,
 and let $N_j$ be the size of $( C_j,  D_j)$.
Denote $\bar x=(x_{p+1},\dots, x_n)$ and $\ba=(a_1,\dots, a_p)$.

 First,  applying  an second type extension with parameters $\delta$ and
     $\ba$  to $(\widetilde T, \widetilde \SJ)$, we obtain
$$F(x)=L^{ \delta}_\ba(x_1,\dots, x_p) \widetilde T(\prod_{j=1}^p x_j^{a_j}, \bar x),
\quad
 G(x)=L^{1- \delta}_\ba(x_1,\dots, x_p)\widetilde \SJ(\prod_{j=1}^p x_j^{a_j}, \bar x).$$
Secondly, applying $p$ extensions of the first type consecutively to the pair $(F,G)$, along the variables $x_1,\dots, x_p$, we obtain
\begin{equation}\label{eq:treee}
\begin{array}{l}
T(x)=\left ( \prod_{j=1}^p  C_j(x_j) \right )L^{ \delta}_\ba(x_1^{N_1},\dots, x_p^{N_p}) \widetilde T(\prod_{j=1}^p x_j^{a_j N_j},\bar x),\\
\SJ(x)=\left (\prod_{j=1}^p D_j(x_j) \right )L^{1- \delta}_\ba(x_1^{N_1},\dots, x_p^{N_p})\widetilde \SJ(\prod_{j=1}^p x_j^{a_j N_j}, \bar x).
\end{array}
\end{equation}
We call $(T,\SJ)$ the \emph{$\N^*$-pair generated by the weighted tree} \eqref{eq:w-tree}.

To show  the generated pair is well-defined, we need to show that $(T,\SJ)$ is independent of
the choice of the branch $\{x_1,\dots, x_p\}$. We will do this in Theorem \ref{thm:product}.


\begin{remark}\label{rem:tiling}{\rm
Clearly, $\#T<\infty$ if and only if  $\#T_0<\infty$ and $\bdelta$ is constantly $0$;
a similar result holds for  $\#\SJ$.
Also, $(T,\SJ)$  is of class ${\cal F}_0$ if and only if $\Phi_v=(\{\bzero\},\{\bzero\})$ for all tops.}
\end{remark}

\subsection{A closed formula of $(T,\SJ)$}
  Let $(T, \SJ)$ be the pair defined by \eqref{eq:treee}.
We define a map $\mu: V \times V_0 \to \N $ as follows.
Pick  $v\in V$ and  $x^*\in V_0$.

 If $v$ is an ancestor of $x^*$, let $v, v_1,\dots, v_{k-1}, v_k=x^*$
be the path from $v$ to  $x^*$;
for $j=1,\dots,k,$  let $c_j=\balpha_{v_j}$
and $M_j$ be the size of  $\Phi_{v_j}$.
 We define
\begin{equation}\label{mea-number}
\mu(v, x^*)=\left \{
\begin{array}{cl}
\prod_{j=1}^k c_jM_j, & \text{ if $v$ is an ancestor of $x^*$;} \\
1, & \text{ if $v=x^*$;}\\
0, & \text{ otherwise}.
\end{array}\right .
\end{equation}
Moreover, we define
$\mu(v)=(\mu(v,x_1),\dots, \mu(v,x_n)).$

For each $v\in V$, we define two power series  $P_v$ and $Q_v$ as follows.  Denote $\Phi_v=(C_v, D_v)$.

If $v$ is a top, we set
 \begin{equation}\label{eq:root}
 P_v(x)=C_v(v),\quad Q_v(x)=D_v(v).
 \end{equation}
  (If  $V=\{\phi\}$, we make the convention
 $P_\phi(\phi)=T_0(\phi)$ and $Q_\phi(\phi)=\SJ_0(\phi)$.)

If $v$ is not a top, let  $z_1,\dots, z_m$ be the offsprings of $v$.
Denote  $\bb=(\balpha_{z_1},\dots,\balpha_{z_m})$ and $M_j$ be the size of
$\Phi_{z_j}$. (If $v=\phi$, we set $C_\phi=T_0$ and $D_\phi=\SJ_0$.) We define
\begin{equation}\label{eq:Pv}
\begin{array}{l}
P_v(x)=C_v(x^{\mu(v)}) L^{\bdelta_v}_{\bb}(x^{M_1\mu(z_1)},\dots, x^{M_m\mu(z_m)}),\\
Q_v(x)=D_v(x^{\mu(v)}) L^{1-\bdelta_v}_{\bb}(x^{M_1\mu(z_1)},\dots, x^{M_m\mu(z_m)}).
\end{array}
\end{equation}

\begin{thm}\label{thm:product}
Let $(T, \SJ)$ be an $\N^n$-pair  generated by the tree $(V,\Gamma)$ with weight (\ref{eq:w-tree}).
Then $P_v(x)$ and $Q_v(x)$ defined by \eqref{eq:root} and \eqref{eq:Pv} are binary power series, and
\begin{equation}\label{eq:Pvv}
T(x)=\prod_{v\in V} P_v(x),  \quad \SJ(x)=\prod_{v\in V} Q_v(x)
\end{equation}
\end{thm}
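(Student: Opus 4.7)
The plan is to prove the theorem by induction on the norm $q = \#V - \#V_0$ of the weighted tree. The base case $q = 0$ is immediate: $V = \{\phi\}$ and the convention $P_\phi(\phi) = T_0(\phi)$, $Q_\phi(\phi) = \SJ_0(\phi)$ makes \eqref{eq:Pvv} trivial.

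For the inductive step I reuse the same branch $\{x_1,\dots, x_p\}$ with common parent $z_1$ that is chosen in the recursive construction \eqref{eq:treee}, and write $\widetilde P_v, \widetilde Q_v$ for the analogs of $P_v, Q_v$ computed on the subtree $\widetilde V$. By the inductive hypothesis, $\widetilde T = \prod_{v \in \widetilde V} \widetilde P_v$ and $\widetilde \SJ = \prod_{v \in \widetilde V} \widetilde Q_v$. The key bookkeeping identity is that for every $v \in \widetilde V$, $\mu(v, x_j) = \widetilde \mu(v, z_1)\, a_j N_j$ for $1 \leq j \leq p$ and $\mu(v, x_k) = \widetilde \mu(v, x_k)$ for $k > p$, because the path in $V$ from $v$ to the new top $x_j$ extends the $\widetilde V$-path from $v$ to $z_1$ by the single edge $[z_1, x_j]$, which contributes the factor $a_j N_j$. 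Consequently, after substituting $y_1 = \prod_{j=1}^p x_j^{a_j N_j}$ and $y_k = x_k$ for $k > p$, one has $y^{\widetilde \mu(v)} = x^{\mu(v)}$ and $y^{M_j \widetilde \mu(z_j)} = x^{M_j \mu(z_j)}$ for each offspring $z_j$ of $v$, whenever $v \in \widetilde V \setminus \{z_1\}$. This yields $\widetilde P_v(y_1, \bar x) = P_v(x)$ for every such $v$. At the special node $z_1$, which was a top in $\widetilde V$ but acquires offsprings $\{x_1,\dots, x_p\}$ in $V$, $\widetilde P_{z_1}(y_1, \bar x) = C_{z_1}(x^{\mu(z_1)})$, while $P_{z_1}(x)$ differs from this by the factor $L^{\bdelta_{z_1}}_\ba(x_1^{N_1},\dots, x_p^{N_p})$; the new tops contribute $P_{x_j}(x) = C_j(x_j)$. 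Multiplying all of these factors reproduces \eqref{eq:treee} exactly, giving $T = \prod_{v \in V} P_v$, and the parallel computation handles $\SJ$.

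It remains to verify that each $P_v$ (and $Q_v$) is binary. Each $P_v$ has nonnegative integer coefficients, and $P_v(\bzero) = 1$ because $C_v(0) = 1$, $L^{\bdelta_v}_\bb(\bzero) = 1$, and $\mu(v) \neq \bzero$ whenever $v$ is not a top. Since $\prod_{v \in V} P_v(x) = T(x)$ is binary while every factor is nonnegative with constant term $1$, any coefficient in some $P_v$ exceeding $1$ would force a coefficient of the product to exceed $1$, contradicting the binarity of $T$. The main obstacle of the proof is the multiplicity identity under tree reduction; once that is in place, the product formula and binarity follow cleanly, and since the right-hand side of \eqref{eq:Pvv} does not depend on which branch is removed, the $\N^*$-pair generated by a weighted tree is well-defined as a free corollary.
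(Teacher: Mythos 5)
Your proof mirrors the paper's own argument essentially step for step: induction on the norm $\#V-\#V_0$, reduction to the subtree $(\widetilde V,\widetilde\Gamma)$ obtained by pruning a branch, the bookkeeping identity $\mu(v,x_j)=\widetilde\mu(v,z_1)\,a_jN_j$ for $j\le p$ and $\mu(v,x_j)=\widetilde\mu(v,x_j)$ for $j>p$, the resulting substitution rule $P_v(x)=\widetilde P_v(z_1,\bar x)\circ(z_1\mapsto\prod_j x_j^{a_jN_j})$ for $v\ne z_1$, the special treatment of $z_1$ where $L^{\bdelta_{z_1}}_{\ba}$ appears, and then recombining via \eqref{eq:treee}. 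You additionally supply an explicit argument for the binarity of each $P_v$ (nonnegative coefficients, constant term $1$ since $\mu(v)\ne\bzero$ off the tops, and a coefficient $\ge 2$ in any factor would be inherited by the binary product $T$), which the paper asserts in the statement but leaves implicit in its proof; that is a genuine and correct completion rather than a different route.
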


\begin{proof} Denote $q=\#V-\#V_0$.  We prove the result by induction on $q$.
If $q=0$,  then $V=\{\phi\}$, and  (\ref{eq:Pvv}) holds  by our convention. Now we assume that $q\geq 1$.

Let $(\widetilde V, \widetilde \Gamma)$ be the   subtree of $(V, \Gamma)$ given by \eqref{eq:VV0},
and the weight is the restricted weight.
According to this restricted weight,
for any $v\in \widetilde V$ and $z\in \widetilde V_0=\{z_1, x_{p+1}, \dots, x_n\}$,
 we can  define a map  $\widetilde{\mu}(v,z)$ as    \eqref{mea-number},
 and   power series   $\widetilde P_v, \widetilde Q_v$ as \eqref{eq:root} and \eqref{eq:Pv}.

Let $(\widetilde T, \widetilde \SJ)$ be the pair generated by the weighted tree $(\widetilde V, \widetilde \Gamma)$.
Denote $\bar x=(x_{p+1},\dots, x_n)$.
By induction hypothesis, we have
\begin{equation}\label{eq:Tq}
\widetilde T(z_1, \bar x)=\prod_{v\in \widetilde V}\widetilde P_v(z_1,\bar x).
\end{equation}
For $j=1,\dots, p$, denote  $a_j=\balpha_{x_j}$,
  and let $N_j$ be the size of $\Phi_{x_j}=(C_{x_j}, D_{x_j})$.
We assert that
\begin{equation}\label{eq:tauv}
x^{\mu(v)}
=(z_1,\bar x)^{\widetilde \mu(v)} \circ \left (z_1\mapsto \prod_{j=1}^p x_j^{a_jN_j}\right ),
~~\text{ for }v\in \widetilde V.
\end{equation}
Notice that \eqref{eq:tauv} holds if the following  holds:
\begin{equation}\label{eq:compose}
 \mu(v,x_j)=\left \{
 \begin{array}{ll}
 \widetilde \mu(v,z_1) a_jN_j, & \text{ if }  1\leq j\leq p;\\
 \widetilde \mu(v,x_j) & \text{ if } p+1\leq j\leq n.
 \end{array}
 \right .
 \end{equation}
If $v$ is  an ancestor of $z_1$ or $v=z_1$, we  have
$\mu(v,x_j)=\widetilde \mu(v,z_1) \mu(z_1,x_j)=\widetilde \mu(v,z_1) a_jN_j,$ $1\leq j\leq p;$
if   $v$ is not an ancestor of $z_1$, we have
$\mu(v,x_1)=\cdots =\mu(v,x_p)=\widetilde \mu(v,z_1)=0.$
So the first assertion of  \eqref{eq:compose}  holds.
The second assertion   holds since the path from $v$ to $x_j$ in $\Gamma$  is also a path in $\widetilde \Gamma$. This proves \eqref{eq:compose} and \eqref{eq:tauv} follows.

Now, we consider the relations between $P_v$ and $\widetilde P_v$ for $v\in \widetilde V$.

If $v=z_1$, since $z_1$ is a top of $\widetilde V$, we have $ \widetilde P_{z_1}(z_1,\bar x)= C_{z_1}(z_1)$;
moreover, $\mu(z_1)=(a_1N_1,\dots, a_pN_p, 0,\cdots, 0)$ and $\mu(x_j)=\be_j$, $1\leq j\leq n$. Denote $\ba=(a_1,\dots,a_p)$, we have
\begin{equation}\label{eq:9.10}
\begin{array}{rl}
P_{z_1}(x) &=C_{z_1}\left(\prod_{j=1}^p x_j^{a_jN_j}\right)L^{\bdelta_{z_1}}_\ba(x_1^{N_1},\dots, x_p^{N_p})\\
       &=\left (\widetilde P_{z_1}(z_1,\bar x)\circ(z_1\mapsto \prod_{j=1}^p x_j^{a_jN_j})\right ) L^{\bdelta_{z_1}}_\ba(x_1^{N_1}, \cdots,  x_p^{N_p}).
\end{array}
\end{equation}

 We claim that
\begin{equation}\label{eq:PV}
P_v(x)
=\widetilde P_v(z_1,\bar x)\circ \left (z_1\mapsto \prod_{j=1}^p x_j^{a_jN_j}\right ), \quad \text{ if }v\in \widetilde V \setminus\{z_1\}.
\end{equation}

If $v\in \{x_{p+1},\dots,x_n\}$, we have $P_v(x)=\widetilde P_v(z_1,\bar x)=C_v(v)$, clearly \eqref{eq:PV} holds.

If $v\in \widetilde V \setminus\{z_1,x_{p+1},\dots,x_n\} $,
denote the offsprings of $v$ by $y_1,\cdots,y_m$.
 Set $\bb=(\balpha_{y_1},\cdots,\balpha_{y_m})$, and let $M_i$ be the  size of $\Phi_{y_i}$ for $1\leq i\leq m$.
Since $C_v=\widetilde C_v$ and $\delta_v=\widetilde \delta_v$, we have
$$
\begin{array}{l}
P_v(x)=C_v(x^{\mu(v)}) L^{\bdelta_v}_{\bb}(x^{M_1\mu(y_1)},\dots, x^{M_m\mu(y_m)}),\\
\widetilde P_v(z_1,\bar x) =C_v((z_1,\bar x)^{\widetilde \mu(v)})L^{\bdelta_v}_\bb((z_1,\bar x)^{M_1\widetilde \mu(y_1)},\dots, (z_1,\bar x)^{M_m\widetilde \mu(y_m)}).
\end{array}
$$
Notice that \eqref{eq:tauv} holds for  all $v, y_1,\dots, y_m\in \widetilde V\setminus\{z_1\}$, so \eqref{eq:PV}   follows.  The claim is proved. Therefore,  we have
$$
\begin{array}{rll}
& T(x)
=\displaystyle \left (\prod_{j=1}^p C_{x_j}(x_j) \right ) L^{\bdelta_{z_1}}_\ba(x_1^{N_1},\dots, x_p^{N_p})\cdot \widetilde T\left(\prod_{j=1}^p x_j^{a_jN_j},\bar x\right ) \quad &\text{(By \eqref{eq:treee})}\\
=& \displaystyle\left (\prod_{j=1}^p P_{x_j}(x)\right ) L^{\bdelta_{z_1}}_\ba(x_1^{N_1},\dots, x_p^{N_p})\cdot \prod_{v\in \widetilde V}\widetilde P_v(z_1,\bar x)\circ (z_1\mapsto \prod_{j=1}^p x_j^{a_jN_j}) \quad &\text{(By \eqref{eq:Tq})}\\
=&\displaystyle\left (\prod_{j=1}^p P_{x_j}(x)\right ) \left (L^{\bdelta_{z_1}}_\ba(x_1^{N_1},\dots, x_p^{N_p}) \widetilde P_{z_1} (\prod_{j=1}^p x_j^{a_jN_j}, \bar x)\right ) \left (\prod_{v\in \widetilde V\setminus \{z_1\}}   P_v(x)  \right )\quad &\text{(By \eqref{eq:PV})} \\
=&\displaystyle\prod_{v\in V} P_v(x). \quad &\text{(By \eqref{eq:9.10}.)}
\end{array}
$$
By symmetry, we have $\SJ(x)=\prod_{v\in V} Q_v(x)$.
 The theorem is proved.
\end{proof}

\section{\textbf{Proof of Theorem \ref{thm:main-2} and an example}}

\begin{proof}[\textbf{Proof of Theorem \ref{thm:main-2}}]
Thanks to Theorem \ref{thm:separable}, we only need show that any primitive $\N^*$-pair $(T, \SJ)$ can be generated by a weighted tree. If  $(T,\SJ)$ is an $\N$-pair,  then it is generated by the tree $\Lambda_0=(\{\phi\}, \emptyset)$
  with initial pair  $(T_0,\SJ_0)=(T, \SJ)$. 

 Suppose  $(T, \SJ)$  is generated by a tree $(V, \Gamma)$ with weight
$((T_0, \SJ_0), \bdelta, \balpha, \Phi)$. In the following, we show that    any one step  extension $(T^*, \SJ^*)$ of it can also be generated by a weighted tree. 
 Denote the tops of $(V, \Gamma)$ by $x_1,\dots, x_n$, and  assume   the extension is  along the variable $x_1$.
Denote  $\bar x=(x_2,\dots, x_n)$.

\textit{Case 1.} Suppose $(T^*, \SJ^*)$ is a first type extension of $(T, \SJ)$ given by
$$
T^*(x)=C(x_1)T(x_1^N, \bar x), \quad  \SJ^*(x)=D(x_1)\SJ(x_1^N, \bar x),
$$
where $(C,D)$ is a $\{0,\dots,N-1\}$-pair. Denote $\Phi_{x_1}=(C_{x_1},D_{x_1}).$
We define a new weight of $(V, \Gamma)$ by only modifying
  $\Phi_{x_1}$ to   the interval pair $(C+N C_{x_1},D+N D_{x_1})$.
 Then  $(T^*, \SJ^*)$  is generated by the tree
$(V, \Gamma)$ with this new weight.

\textit{Case 2.} Suppose  $(T^*, \SJ^*)$ is a second type extension of $(T, \SJ)$ given by
$$
T^*(y, \bar x)=L^s_\bb(y) T(y^{\bb}, \bar x),
\quad
\SJ^*(y, \bar x)=L^{1-s}_\bb(y) \SJ(y^{\bb},  \bar x),
$$
where $y=(y_1,\dots, y_m)$, $\bb=(b_1,\dots, b_m)>0$ and $s\in \{0,1\}$.
We set $V'=V\cup\{y_1,\dots, y_m\}$, and let $\Gamma'=\Gamma\cup\{[x_1,y_j];~ j=1,\dots, m\}$ be the edge set.
We define a weight $((T_0, \SJ_0), \bdelta', \balpha', \Phi')$ of $(V',\Gamma')$, which is the extension of  the original weight by adding the following parameters:
$(i)$ Set $\bdelta'_{x_1}=s$; \ $(ii)$ For $j\in \{1,\dots, m\}$, set $\balpha'_{y_j}=b_j$ and $\Phi'_{y_j}=(\{\bzero\}, \{\bzero\})$.
Then  $(T^*, \SJ^*)$ is  generated by  $(V', \Gamma')$ with the above weight.
\end{proof}

   \begin{figure}
  \begin{center}
  \includegraphics[height=0.21\textheight]{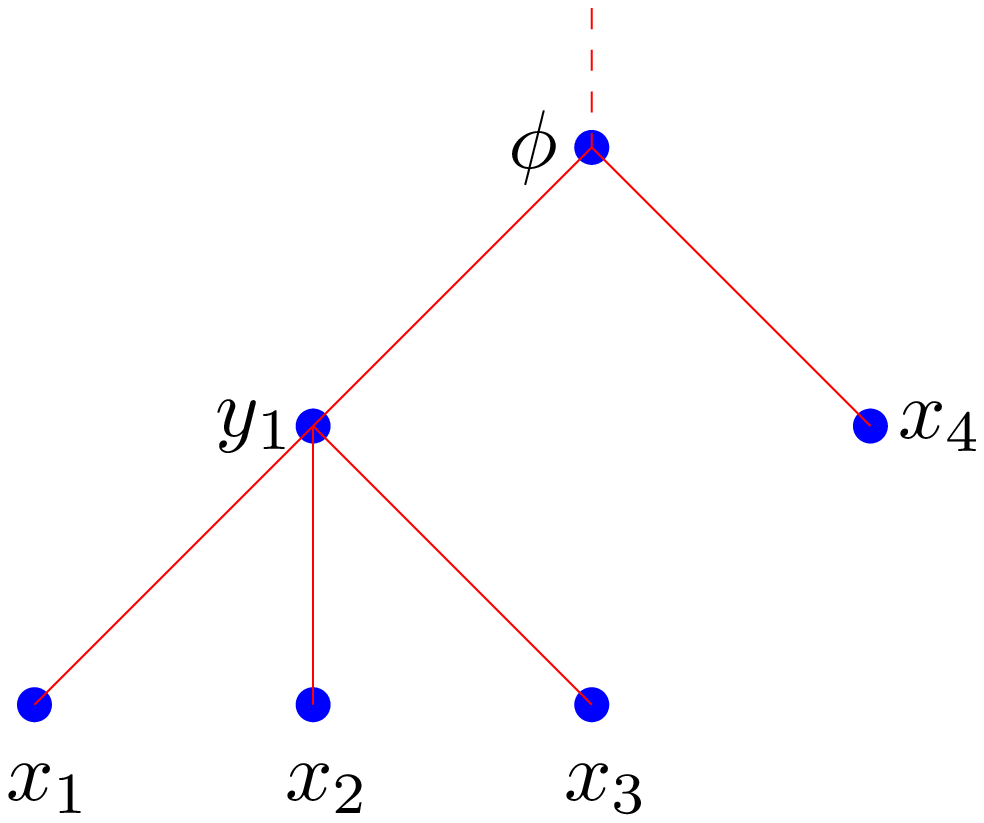}\\
  \end{center}
  \vskip -0.4cm
  \caption{A rooted tree}
\label{fig:tree}
\end{figure}

\begin{table}[h]
 The function $\balpha$ and $\Phi$:\\
 \smallskip
\centering
\begin{tabular}{|c|c|c|c|c|c|c|}
\hline
node &  $y_1$         & $x_4$         &$x_1$        & $x_2$      &$x_3$        \\\hline
$\balpha$  &     2          & 3             &    1        &    1       &    1       \\\hline
$C$ &  $\{0,2\}$      &  $\{0,2\}$      &    \{0\}       &     \{0\}       &     \{0\}       \\\hline
$D$ & $\{0,1\}$&$\{0,1\}$&\{0\} &\{0\} & \{0\}  \\\hline
$N$  &      4          &    4  &1 & 1&1\\\hline
\end{tabular}
\smallskip
\caption{We set $\bdelta$ defined on $\{\phi,y_1\}$ to be constantly zero. }\label{tab:tab1}
\end{table}

\begin{example}\label{example:2}{\rm
 Let $(\Gamma, V)$ be a rooted tree
indicated by Figure \ref{fig:tree}. The node set
 $V=\{\phi, y_1, x_1, x_2, x_3, x_4\}$. We set the initial $\N$-pair to be
 $$T_0(\phi)=1+\phi^2, \quad \SJ_0(\phi)=(1+\phi)\sum_{k=0}^\infty \phi^{4k};$$
 the other parameters of the weight is given by Table \ref{tab:tab1}.
Set
$$
V_0=\{\phi\}, \ V_1=\{\phi; y_1, x_4\}, V_2=\{\phi; y_1, x_4; x_1, x_2, x_3\}.
$$
Let $(V_j,\Gamma_j),j=0,1,2,$ be  subtrees of $(V,\Gamma)$.
The $\N^*$-tile $T_1$ corresponding to the subtree $(V_1,\Gamma_1)$ is
$
T_1(y_1,x_4)=(1+y_1^2)(1+x_4^2)(1+y_1^{16}x_4^{24}).
$
The $\N^*$-tile $T_2$ corresponding to the tree $(V_2,\Gamma_2)=(V,\Gamma)$ is
$$
\begin{array}{rl}
T_2(x_1,x_2,x_3,x_4)
=&1\cdot 1\cdot 1\cdot (1+(x_1x_2x_3)^2)\cdot (1+x_4^2)\cdot\left (1+(x_1x_2x_3)^{16}x_4^{24} \right ).\\
=&P_{x_1}(x)\cdot P_{x_2}(x) \cdot P_{x_3}(x)\cdot P_{y_1}(x)\cdot P_{x_4}(x)\cdot P_{\phi}(x).
\end{array}
$$
 }\end{example}

\end{document}